\numberwithin{equation}{section}
\numberwithin{figure}{section}
\newtheorem{definition}{Definition}[section]
\newtheorem{theorem}[definition]{Theorem}
\newtheorem{lemma}[definition]{Lemma}
\newtheorem{corollary}[definition]{Corollary}
\begin{document}
\title{An Expansion of the Continuity Property}
\author{Jon M. Corson and Evan M. Lee}
\begin{abstract}
One of the advantages of working with Alexander-Spanier-\v{C}ech type
cohomology theory is the continuity property: For inverse systems
of sufficiently well-behaved spaces, the result of taking the cohomology
of their limit is a direct limit of their cohomologies. However, \v{C}ech
cohomology natively works with presheaves of modules rather than modules
themselves. We define the notion of a system of presheaves for an
inverse system of topological spaces, and show that, under the same
circumstances as the ordinary continuity property, a suitable limit
of the system provides the \v{C}ech cohomology of the inverse limit
of the spaces. We then show one application of this result in comparing
the cohomology of an inverse limit of finite groups to that of the
inverse limit of their classifying spaces.
\end{abstract}

\maketitle
\phantom{a}\\
Mathematics Subject Classification Codes: 18G30, 18G60, 10J06, 22C05,
55N05\\
\\
Corresponding Author: Evan Lee, emlee4@crimson.ua.edu\\
\\
Authors' affiliation:\\
Department of Mathematics\\
University of Alabama\\
Tuscaloosa, AL, USA\\
\\
Keywords: \v{C}ech cohomology, profinite groups, algebraic topology,
topological algebra\\
\pagebreak\textbf{}\\
\textbf{Funding Declaration:}\\
There is no funding to declare. \\
\\
\textbf{Competing Interest Declaration:}\\
There are no competing interests to declare. \\
\\
\textbf{Availability of Data and Materials declaration:}\\
There are no data or materials outside this manuscript to declare.\\
 \\
\textbf{Author contributions:}\\
Both authors, Jon M. Corson and Evan M. Lee, produced the manuscript
together.

\pagebreak{}

\section{Preliminaries on Presheaves and \v{C}ech Cohomology}

We begin by expanding some basic definitions and theorems regarding
presheaves from \cite{MR1325242} to a slightly more general case.
When referring to a basis of a topological space, we consider only
those bases which are closed under finite interesction of two or more
elements.

Let $X$ be a topological space, and $\mathcal{B}$ a basis for the
topology of $X$. We may view $\mathcal{B}$ as a category whose objects
are the open subsets $U\in\mathcal{B}$ and whose morphisms are inclusion
maps $i:U\to V$. Then a \emph{presheaf on $\mathcal{B}$ }is a contravariant
functor from this category to some other concrete category~$\mathscr{C}$,
such as sets, vector spaces, or groups, which assigns to each inclusion
map $i$ a restriction map $\Gamma(i):\Gamma(V)\to\Gamma(U)$ such
that for any composition $i\circ j$, $\Gamma(i\circ j)=\Gamma(j)\circ\Gamma(i)$,
and the identity $\text{id}_{U}:U\to U$ satisfies $\Gamma(\text{id}_{U})=\text{id}_{\Gamma(U)}$.
For $\gamma\in\Gamma(U)$, if $V\subset U$ with the inclusion map
$i:V\to U$, then take $\gamma|V$ to mean $\Gamma(i)(\gamma)$, which
is an element of $\Gamma(V)$. In the case where $\mathcal{B}$ consists
of \emph{all} of the open subsets of $X$, we will simply call $\Gamma$
a \emph{presheaf on $X$}.

If $\mathscr{U}$ is a collection of elements of $\mathcal{B}$, a
compatible $\mathscr{U}$ family of $\Gamma$ is an indexed family
$\lbrace\gamma_{U}\in\Gamma(U)\rbrace_{U\in\mathscr{U}}$ such that
for every pair $U,V\in\mathscr{U}$, $\gamma_{U}|U\cap V=\gamma_{V}|U\cap V$.
A presheaf on $X$ is called a\emph{ sheaf} if it also satisfies the
following conditions:
\begin{description}
\item [{(G) The Gluing condition}] Given a collection $\mathscr{U}$ of
open subsets of $X$ with $V=\underset{U\in\mathscr{U}}{\bigcup}U$
and a compatible $\mathscr{U}$ family $\lbrace\gamma_{U}\rbrace_{U\in\mathscr{U}}$,
there is an element ${\gamma\in\Gamma(V)}$ such that $\gamma|U=\gamma_{U}$
for all $U\in\mathscr{U}$.
\item [{(M) The Monopresheaf condition}] Given a collection $\mathscr{U}$
of open subsets of $X$ with $V=\underset{U\in\mathscr{U}}{\bigcup}U$,
for any two elements ${\gamma_{1},\gamma_{2}\in\Gamma(V)}$, if $\gamma_{1}|U=\gamma_{2}|U$
for all ${U\in\mathscr{U}}$, then $\gamma_{1}=\gamma_{2}$. Equivalently
for presheaves of modules or abelian groups, if for some ${\gamma\in\Gamma(V)}$,
$\gamma|U=0$ for all $U\in\mathscr{U}$, then $\gamma=0$. 
\end{description}
Given a basis $\mathcal{B}$, for each presheaf $\Gamma$ of modules
on $\mathcal{B}$ we obtain a new presheaf $\Gamma^{+}$ on $X$ whose
elements are compatible families of $\Gamma$. Given a collection
$\mathscr{U}$ of elements of $\mathcal{B}$, let $\Gamma(\mathscr{U})$
be the collection of compatible $\mathscr{U}$ families of $\Gamma$.
If $\mathscr{V}$ is another collection of elements of $\mathcal{B}$
which refines $\mathscr{U}$ as an open cover, then there is a homomorphism
$\Gamma(\mathscr{U})\to\Gamma(\mathscr{V})$ which assigns to each
compatible $\mathscr{U}$ family $\lbrace\gamma_{U}\rbrace$ the compatible
$\mathscr{V}$ family $\lbrace\gamma_{V}\rbrace$ such that if $V\in\mathscr{V}$
is contained in $U\in\mathscr{U}$, then ${\gamma_{V}=\gamma_{U}|V}$;
this is uniquely defined since $\lbrace\gamma_{U}\rbrace$ is a compatible
family. For any fixed open subset $W$ of $X$, let $\mathscr{U}$
vary over the family of open coverings of $W$ by elements of $\mathcal{B}$;
then the collection $\lbrace\Gamma(\mathscr{U})\rbrace$ is a direct
system of modules, so define $\Gamma^{+}(W):=\varinjlim\lbrace\Gamma(\mathscr{U})\rbrace$.
If $W'\subset W$ and $\mathscr{U}\subset\mathcal{B}$ is an open
covering of $W$, then ${\mathscr{U}':=\lbrace U'\subset U\cap W':U\in\mathscr{U},U'\in\mathcal{B}\rbrace}$
is an open covering of $W'$ by elements of $\mathcal{B}$ which refines
$\mathscr{U}$, giving a homomorphism $\Gamma(\mathscr{U})\to\Gamma(\mathscr{U'})$.
By passing to limits, this gives a homomorphism $\Gamma^{+}(W)\to\Gamma^{+}(W')$.
This defines a presheaf $\Gamma^{+}$ on $X$ which depends only on
the values $\Gamma(U)$ for small open subsets $U$ in $\mathcal{B}$.

Furthermore, given any map $\tau:\Gamma_{1}\to\Gamma_{2}$ (i.e.,
natural transformation) between two presheaves of modules on $\mathcal{B}$,
there is an induced map $\tau^{+}:\Gamma_{1}^{+}\to\Gamma_{2}^{+}$
of presheaves on $X$ defined as follows: Let $\gamma\in\Gamma_{1}^{+}(V)$
be represented by a compatible $\mathscr{U}$ family $\lbrace\gamma_{U}\rbrace_{U\in\mathscr{U}}$
where $\mathscr{U}\subset\mathcal{B}$ covers~$V$; then $\tau^{+}(\gamma)\in\Gamma_{2}^{+}(V)$
is the element represented by the compatible $\mathscr{U}$ family
$\lbrace\tau(\gamma_{U})\rbrace_{U\in\mathscr{U}}$. This determines
a homomorphism $\tau^{+}:\Gamma_{1}^{+}(V)\to\Gamma_{2}^{+}(V)$.
These homomorphisms commute with restriction maps, and thus form a
map of presheaves on $X$. Moreover, it is clear that the $+$~operation
is a functor from the category of presheaves on $\mathcal{B}$ to
the category of presheaves on $X$.

Going in the other direction, if $\Gamma$ is any presheaf on $X$,
then we get a presheaf $\Gamma_{\mathcal{B}}$ on $\mathcal{B}$ by
restricting $\Gamma$ to the subcategory $\mathcal{B}$. This clearly
defines a functor from the category of presheaves on $X$ to the category
of presheaves on $\mathcal{B}$. Observe that if $\Gamma$ is a presheaf
on $X$, then $(\Gamma_{\mathcal{B}})^{+}=\Gamma^{+}$ (since every
collection of open subsets of $X$ has a refinement in $\mathcal{B}$
that covers the same set). Whereas, if $\Gamma$ is a presheaf on
$\mathcal{B}$, then there is a natural map $\alpha:\Gamma\to(\Gamma^{+})_{\mathcal{B}}$
of presheaves on $\mathcal{B}$ which for each $V\in\mathcal{B}$,
assigns to each $\gamma\in\Gamma(V)$ the element of $(\Gamma^{+})_{\mathcal{B}}(V)=\Gamma^{+}(V)$
represented by the compatible family $\lbrace\gamma\rbrace\in\Gamma(\mathscr{V})$,
where $\mathscr{V}=\{V\}$. If $\tau:\Gamma_{1}\to\Gamma_{2}$ is
any map of presheaves on $\mathcal{B}$, then we have a commutative
diagram:

\medskip{}

\noindent \begin{center}
\begin{tikzcd} \Gamma_1 \arrow[d, "\alpha"'] \arrow[r, "\tau"] & \Gamma_2 \arrow[d, "\alpha"] \\  {(\Gamma_1^+)}_{\mathcal B} \arrow[r, "(\tau^{+})_{\mathcal B}"]     & {(\Gamma_2^+)}_{\mathcal B}               \end{tikzcd}
\par\end{center}

\noindent \medskip{}
Using the notion of compatible families, the sheaf axioms can be stated
more briefly as:
\begin{description}
\item [{(G)}] If $V$ is an open subset of $X$ and $\mathscr{U}$ is
an open cover $V$, then the restriction homomorphism $\Gamma(V)\to\Gamma(\mathscr{U})$
is surjective.
\item [{(M)}] If $V$ is an open subset of $X$ and $\mathscr{U}$ is
an open cover $V$, then the restriction homomorphism $\Gamma(V)\to\Gamma(\mathscr{U})$
is injective.
\end{description}
Here are some other equivalent ways of stating condition \textbf{(M)}:

\begin{lemma}\label{(M)conditionslemma}

Let $\Gamma$ be a presheaf of modules on $X$. Then the following
conditions are equivalent: 
\begin{enumerate}
\item[$\boldsymbol{(\text{M})}$] If $\mathscr{U}$ is an open cover of a subset $V\subset X$, then
the restriction map $\Gamma(V)\to\Gamma(\mathscr{U})$ is injective. 
\item[$\boldsymbol{(\text{M})}'$] If $\mathscr{U}$ and $\mathscr{W}$ are open covers of a subset
$V\subset X$ and $\mathscr{W}$ is a refinement of $\mathscr{U}$,
then the restriction map $\Gamma(\mathscr{U})\to\Gamma(\mathscr{W})$
is injective. 
\item[$\boldsymbol{(\text{M})}''$] If $\mathscr{U}$ is an open cover of a subset $V\subset X$, then
the canonical homomorphism $\Gamma(\mathscr{U})\to\Gamma^{+}(V)$
is injective. 
\item[$\boldsymbol{(\text{M})}'''$] The natural map of presheaves $\alpha:\Gamma\to\Gamma^{+}$ is injective. 
\end{enumerate}
\end{lemma}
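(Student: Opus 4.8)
The plan is to prove the cyclic chain of implications $(\text{M})\Rightarrow(\text{M})'\Rightarrow(\text{M})''\Rightarrow(\text{M})'''\Rightarrow(\text{M})$, so that all four conditions become equivalent. Throughout I would fix a subset $V\subset X$ and write, for an open cover $\mathscr{U}$ of $V$, $c_{\mathscr{U}}:\Gamma(\mathscr{U})\to\Gamma^{+}(V)$ for the canonical map into the colimit and, whenever $\mathscr{W}$ refines $\mathscr{U}$, $\rho_{\mathscr{U}}^{\mathscr{W}}:\Gamma(\mathscr{U})\to\Gamma(\mathscr{W})$ for the restriction homomorphism described before the lemma. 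The only general facts I need about the directed system $\{\Gamma(\mathscr{U})\}$ (directed because any two covers admit the common refinement of pairwise intersections) are the structural identity $c_{\mathscr{U}}=c_{\mathscr{W}}\circ\rho_{\mathscr{U}}^{\mathscr{W}}$ and the fact that, in a directed colimit of modules, an element of $\Gamma(\mathscr{U})$ lies in $\ker c_{\mathscr{U}}$ if and only if it is killed by $\rho_{\mathscr{U}}^{\mathscr{W}}$ for some refinement $\mathscr{W}$. I would also record that the trivial cover $\{V\}$ gives $\Gamma(\{V\})=\Gamma(V)$, that $\alpha_{V}=c_{\{V\}}$, and that the restriction map $\Gamma(V)\to\Gamma(\mathscr{U})$ of condition $(\text{M})$ is exactly $\rho_{\{V\}}^{\mathscr{U}}$; consequently $\alpha_{V}=c_{\mathscr{U}}\circ\rho_{\{V\}}^{\mathscr{U}}$.

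With this setup in hand, three of the four implications are purely formal. For $(\text{M})'\Rightarrow(\text{M})''$, if $x\in\ker c_{\mathscr{U}}$ then $\rho_{\mathscr{U}}^{\mathscr{W}}(x)=0$ for some refinement $\mathscr{W}$, and $(\text{M})'$ forces $x=0$. For $(\text{M})''\Rightarrow(\text{M})'''$, taking $\mathscr{U}=\{V\}$ in $(\text{M})''$ says precisely that $\alpha_{V}=c_{\{V\}}$ is injective for every $V$, which is $(\text{M})'''$. For $(\text{M})'''\Rightarrow(\text{M})$, the factorization $\alpha_{V}=c_{\mathscr{U}}\circ\rho_{\{V\}}^{\mathscr{U}}$ exhibits the restriction map $\rho_{\{V\}}^{\mathscr{U}}:\Gamma(V)\to\Gamma(\mathscr{U})$ as a left factor of an injection, hence itself injective. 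Each of these is a one-line consequence of the structural observations above.

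The substantive step, and the one I expect to be the main obstacle, is $(\text{M})\Rightarrow(\text{M})'$, where the compatibility of families must be used in an essential way. Given a refinement $\mathscr{W}$ of $\mathscr{U}$, I would fix for each $W\in\mathscr{W}$ some $\sigma(W)\in\mathscr{U}$ with $W\subset\sigma(W)$, so that $\rho_{\mathscr{U}}^{\mathscr{W}}$ sends $\{\gamma_{U}\}$ to $\{\gamma_{\sigma(W)}|W\}$. Assuming this image is $0$, I must deduce $\gamma_{U_{0}}=0$ for every fixed $U_{0}\in\mathscr{U}$. The idea is to test $\gamma_{U_{0}}$ against the cover $\{W\cap U_{0}:W\in\mathscr{W}\}$ of the open set $U_{0}$: using the compatibility relation $\gamma_{U_{0}}|(U_{0}\cap\sigma(W))=\gamma_{\sigma(W)}|(U_{0}\cap\sigma(W))$ and then restricting further to $W\cap U_{0}\subset U_{0}\cap\sigma(W)$, one obtains $\gamma_{U_{0}}|(W\cap U_{0})=(\gamma_{\sigma(W)}|W)|(W\cap U_{0})=0$. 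Thus $\gamma_{U_{0}}$ restricts to $0$ on every member of a cover of $U_{0}$, and condition $(\text{M})$ applied to $U_{0}$ yields $\gamma_{U_{0}}=0$; since $U_{0}$ was arbitrary, $\rho_{\mathscr{U}}^{\mathscr{W}}$ is injective. The delicate point is precisely that one cannot compare $\gamma_{U_{0}}$ with the vanishing data on $W$ directly, but only after passing to the intersection $W\cap U_{0}$ and invoking compatibility, which is why $(\text{M})$ rather than a naive restriction argument is needed to close the cycle.
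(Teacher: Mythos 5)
Your proposal is correct and follows essentially the same route as the paper: the same cyclic chain $(\text{M})\Rightarrow(\text{M})'\Rightarrow(\text{M})''\Rightarrow(\text{M})'''\Rightarrow(\text{M})$, with the substantive step $(\text{M})\Rightarrow(\text{M})'$ handled exactly as in the paper by testing each $\gamma_{U_0}$ against the cover $\{W\cap U_0: W\in\mathscr{W}\}$ of $U_0$ and invoking compatibility. If anything, your justification of $(\text{M})'\Rightarrow(\text{M})''$ via the directed-colimit criterion for vanishing is spelled out more carefully than the paper's one-line remark there.
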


\begin{proof}

$\boldsymbol{\text{(M)}}\Rightarrow\boldsymbol{\text{(M)}}'$: Let
$\mathscr{U}$ and $\mathscr{W}$ be open covers of an open set $V$,
where $\mathscr{W}$ is a refinement of $\mathscr{U}$, and let $\{\gamma_{U}\}\in\ker[\Gamma(\mathscr{U})\to\Gamma(\mathscr{W})]$.
Then given any $U\in\mathscr{U}$, let $\mathscr{W}'_{U}=\{W\cap U\mid W\in\mathscr{W}\}$
and note that $\{\gamma_{U}\}\in\ker[\Gamma(\mathscr{U})\to\Gamma(\mathscr{W}'_{U})]$,
since $\Gamma(\mathscr{U})\to\Gamma(\mathscr{W}'_{U})$ factors through
$\Gamma(\mathscr{W})$. Thus $\gamma_{U}|_{W'}=0$ for each $W'\in\mathscr{W}'_{U}$
(since $W'\subset U$ for each $W'\in\mathscr{W}'_{U}$), and $\mathscr{W}'_{U}$
is an open cover of $U$. Therefore, if $\Gamma$ satisfies~$\boldsymbol{(\text{M})}$,
then $\gamma_{U}=0$ (each $U\in\mathscr{U}$) and it follows that
$\Gamma(\mathscr{U})\to\Gamma(\mathscr{W})$ is injective.

$\boldsymbol{(\text{M})}'\Rightarrow\boldsymbol{(\text{M})}''$: Condition
$(\text{M})$ is a special case of $(\text{M})'$.

$\boldsymbol{(\text{M})}''\Leftrightarrow\boldsymbol{(\text{M})}'''$:
Let $V\subset X$ be open. Then $\alpha:\Gamma(V)\to\Gamma^{+}(V)$
is the canonical homomorphism corresponding to the open cover $\mathscr{U}=\{V\}$
of $V$. Hence $\alpha$ is injective, if $\Gamma$ satisfies $\boldsymbol{(\text{M})}''$.

$\boldsymbol{(\text{M})}'''\Rightarrow\boldsymbol{(\text{M})}$: Let
$\mathscr{U}$ be an open cover of $V\subset X$. Then $\alpha$ factors
as the composition $\Gamma(V)\to\Gamma(\mathscr{U})\to\Gamma^{+}(V)$.
Thus $\Gamma(V)\to\Gamma(\mathscr{U})$ is injective, if $\Gamma$
satisfies $\boldsymbol{(\text{M})}'''$.

\end{proof}

Consequently, if $\Gamma$ satisfies condition $\boldsymbol{(\text{M})}$
and $V$ is an open subset of $X$, then we can identify the modules
$\Gamma(\mathscr{U})$ ($\mathscr{U}$ an open cover of $V$) with
submodules of $\Gamma^{+}(V)$ so that $\Gamma^{+}(V)=\bigcup_{\mathscr{U}}\Gamma(\mathscr{U})$.

\begin{lemma}\label{completionissheaflemma}~
\begin{description}
\item [{(a)}] If $\Gamma$ is a presheaf of modules on the basis $\mathcal{B}$,
then $\Gamma^{+}$ is a presheaf on $X$ satisfying \textbf{(M)}.
\item [{(b)}] If $\Gamma$ is a presheaf of modules on $X$ satisfying
\textbf{(M)}, then $\Gamma^{+}$ is a sheaf on X.
\end{description}
In particular, for any presheaf $\Gamma$ of modules on $\mathcal{B}$,
the presheaf $\Gamma^{++}$ is a sheaf on $X$.

\end{lemma}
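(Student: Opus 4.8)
The plan is to prove (a) and (b) separately and then deduce the final assertion by composing them. Throughout I would work with the explicit description $\Gamma^{+}(W)=\varinjlim_{\mathscr{U}}\Gamma(\mathscr{U})$, the limit taken over covers of $W$ by elements of $\mathcal{B}$ directed by refinement (the system is directed precisely because $\mathcal{B}$ is closed under finite intersection), together with the standard criterion that a class in a direct limit of modules vanishes if and only if its chosen representative is killed at some later stage of the system.

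For (a) I would verify condition \textbf{(M)} in its original form. Let $\mathscr{U}$ be a cover of an open set $V\subset X$ and suppose $\gamma\in\Gamma^{+}(V)$ restricts to $0$ on every $U\in\mathscr{U}$. Choose a representative $\{\gamma_{B}\}_{B\in\mathscr{B}_{0}}$ of $\gamma$, with $\mathscr{B}_{0}\subset\mathcal{B}$ a cover of $V$. The hypothesis $\gamma|U=0$ in $\Gamma^{+}(U)$, unwound through the direct-limit vanishing criterion, produces for each $U$ a cover $\mathscr{C}_{U}$ of $U$ by elements of $\mathcal{B}$, each contained in some $B\in\mathscr{B}_{0}$, on which the representative dies: $\gamma_{B}|C=0$ for all $C\in\mathscr{C}_{U}$. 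Setting $\mathscr{C}=\bigcup_{U\in\mathscr{U}}\mathscr{C}_{U}$ yields a single cover of $V$ refining $\mathscr{B}_{0}$ on which $\{\gamma_{B}\}$ is identically zero, so $\gamma=0$ in the limit. That is the whole of (a).

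For (b), condition \textbf{(M)} for $\Gamma^{+}$ is immediate from (a) (viewing $\Gamma$ as a presheaf on the basis of all open sets, for which $(\Gamma_{\mathcal{B}})^{+}=\Gamma^{+}$), so the real work is the gluing condition \textbf{(G)}. Given a cover $\{U_{i}\}$ of $V$ and a compatible family $\sigma_{i}\in\Gamma^{+}(U_{i})$, I would represent each $\sigma_{i}$ by a compatible family $\{\gamma_{W}^{i}\}_{W\in\mathscr{W}_{i}}$ over a cover $\mathscr{W}_{i}$ of $U_{i}$, and then glue these into one family over $\mathscr{W}=\bigcup_{i}\mathscr{W}_{i}$, a cover of $V$. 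The one thing to check is that for $W\in\mathscr{W}_{i}$ and $W'\in\mathscr{W}_{j}$ one has $\gamma_{W}^{i}|(W\cap W')=\gamma_{W'}^{j}|(W\cap W')$; restricting the identities $\sigma_{i}|W=\alpha(\gamma_{W}^{i})$ and $\sigma_{j}|W'=\alpha(\gamma_{W'}^{j})$ down to $W\cap W'$ and invoking compatibility of the $\sigma_{i}$ shows that these two sections agree \emph{after} applying $\alpha$. This is the crux: since $\Gamma$ satisfies \textbf{(M)}, Lemma~\ref{(M)conditionslemma} gives that $\alpha$ is injective, so the two sections agree on the nose. The resulting compatible family defines $\sigma\in\Gamma^{+}(V)$, and a final direct-limit bookkeeping check—restricting both $\sigma|U_{i}$ and $\sigma_{i}$ to the cover $\mathscr{W}_{i}$, where they visibly coincide—confirms $\sigma|U_{i}=\sigma_{i}$.

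Finally, the ``in particular'' statement follows by chaining the two parts: for an arbitrary presheaf $\Gamma$ on $\mathcal{B}$, part (a) makes $\Gamma^{+}$ a presheaf on $X$ satisfying \textbf{(M)}, whereupon part (b) applied to $\Gamma^{+}$ makes $\Gamma^{++}=(\Gamma^{+})^{+}$ a sheaf. I expect the main obstacle to be the gluing step in (b): both the compatibility verification, which is exactly where the hypothesis \textbf{(M)} on $\Gamma$ enters through the injectivity of $\alpha$, and the somewhat delicate direct-limit bookkeeping needed to identify $\sigma|U_{i}$ with $\sigma_{i}$.
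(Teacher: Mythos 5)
Your proposal is correct and takes essentially the same route as the paper: part (a) is the identical direct-limit vanishing argument over the union of refining covers, and part (b) glues local representatives over $\mathscr{W}=\bigcup_i\mathscr{W}_i$ and secures their pairwise compatibility via Lemma~\ref{(M)conditionslemma}, with your appeal to injectivity of $\alpha$ (condition $\boldsymbol{(\text{M})}'''$) playing exactly the role of the paper's appeal to condition $\boldsymbol{(\text{M})}''$. No gaps.
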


\begin{proof}

It is clear from the definition that $\Gamma^{+}$ is a presheaf on
$X$, so to prove \textbf{(a)} it suffices to show that it satisfies
\textbf{(M)}. Let $V$ be an open subset of $X$ and let $\mathscr{U}$
be an open cover of $V$. Suppose ${\gamma\in\Gamma^{+}(V)}$ is such
that $\gamma|U=0$ for all $U\in\mathscr{U}$. Then $\gamma$ is represented
by some compatible family $\{\gamma_{W}\}\in\Gamma(\mathscr{W})$,
where $\mathscr{W}$ is an open cover of $V$. Moreover, for each
$U\in\mathscr{U}$, there exists a $\mathscr{W}'_{U}\subset\mathscr{B}$
that covers $U$ and is a refinement of $\mathscr{W}$ such that $\{\gamma_{W}\}_{W\in\mathscr{W}}$
is in the kernel of ${\Gamma(\mathscr{W})\to\Gamma(\mathscr{W}'_{U})}$
(since $\gamma|U=0$). Now let $\mathscr{W}'=\bigcup_{U\in\mathscr{U}}\mathscr{W}'_{U}\subset\mathcal{B}$.
Then $\mathscr{W}'$ is a refinement of $\mathscr{W}$ that covers
$V$ such that $\{\gamma_{W}\}_{W\in\mathscr{W}}$ is in the kernel
of $\Gamma(\mathscr{W})\to\Gamma(\mathscr{W}')$. Thus $\gamma=0$
in the direct limit $\Gamma^{+}(V)$.

For \textbf{(b)}, let $\Gamma$ be a presheaf on $X$ satisfying \textbf{(M)}.
To see that $\Gamma^{+}$ is a sheaf, first observe that $\Gamma^{+}$
satisfies \textbf{(M)} as a result of \textbf{(a)}. For \textbf{(G)},
let $\mathscr{U}$ be an open cover of an open set $V$ and $\{\gamma_{U}\}$
a compatible $\mathscr{U}$ family of $\Gamma^{+}$. By Lemma~\ref{(M)conditionslemma},
$\Gamma$ satisfies $\boldsymbol{(\text{M})}''$, so for each $U\in\mathscr{U}$,
there is an open cover $\mathscr{W}_{U}$ of $U$ such that $\gamma_{U}=\{\gamma_{W}\}\in\Gamma(\mathscr{W}_{U})\hookrightarrow\Gamma^{+}(U)$.
Let $U_{1},U_{2}\in\mathscr{U}$ and let $\mathscr{V}=\{W_{1}\cap W_{2}\mid W_{1}\in\mathscr{W}_{U_{1}},W_{2}\in\mathscr{W}_{U_{2}}\}$.
Then $\mathscr{V}$ is an open cover of $U_{1}\cap U_{2}$ which is
a common refinement of $\mathscr{W}_{U_{1}}$ and $\mathscr{W}_{U_{2}}$.
Since $\Gamma(\mathscr{V})\to\Gamma^{+}(U_{1}\cap U_{2})$ is injective
by Lemma~\ref{(M)conditionslemma}, the images of $\gamma_{U_{1}}=\{\gamma_{W}\}_{W\in\mathscr{W}_{U_{1}}}$
and $\gamma_{U_{2}}=\{\gamma_{W}\}_{W\in\mathscr{W}_{U_{2}}}$ in
$\Gamma(\mathscr{V})$ must be equal. That is, $\gamma_{W_{1}}|_{W_{1}\cap W_{2}}=\gamma_{W_{2}}|_{W_{1}\cap W_{2}}$
for all $W_{1}\in\mathscr{W}_{U_{1}}$ and $W_{2}\in\mathscr{W}_{U_{2}}$.
It follows that $\{\gamma_{W}\}_{W\in\mathscr{W}}$ is a compatible
$\mathscr{W}$ family of $\Gamma$, where $\mathscr{W}=\bigcup_{U\in\mathscr{U}}\mathscr{W}_{U}$
is an open cover of $V$. Thus, we have an element $\gamma=\{\gamma_{W}\}\in\Gamma(\mathscr{W})\hookrightarrow\Gamma^{+}(V)$
such that $\gamma|_{U}=\gamma_{U}$ for each $U\in\mathscr{U}$.

\end{proof}

With this established, we will call $\widehat{\Gamma}:=\Gamma^{++}$
the \emph{sheafification} of $\Gamma$. Note that sheafification is
a functor from the category of presheaves on a basis $\mathcal{B}$
to the category of sheaves on $X$. It is easy to see that it is left
adjoint to the restriction functor $\bullet_{\mathcal{B}}$. That
is, if $\Gamma$ is a presheaf on $\mathcal{B}$ and $\Gamma'$ is
a sheaf on $X$, then there is a natural bijection 
\[
\hom(\Gamma,\Gamma'_{\mathcal{B}})\cong\hom(\widehat{\Gamma},\Gamma').
\]

\noindent \medskip{}

The following definition is adapted from \cite{stacks2021stacks}.
Let ${\varphi:X\to Y}$ be a surjective continuous map of topological
spaces, and let $\Gamma_{X}$ be a presheaf on $X$. Then we obtain
a presheaf $\varphi\Gamma_{X}$ on $Y$, called the\emph{ pushforward
of $\Gamma_{X}$}, defined by the formula ${\varphi\Gamma_{X}(U):=\Gamma_{X}(\varphi^{-1}(U))}$.
If ${V\subset U}$, then ${\varphi^{-1}(V)\subset\varphi^{-1}(U)}$,
and the restriction map of the pushforward is the map which makes
the following diagram commute:\medskip{}

\noindent \begin{center}
\begin{tikzcd} \varphi \Gamma_X (U) \arrow[r] \arrow[d, "r"'] & \Gamma_X(\varphi^{-1}(U)) \arrow[d, "r"] \arrow[l, "="'] \\ \varphi \Gamma_X(V) \arrow[r]                  & \Gamma_X(\varphi^{-1}(V)) \arrow[l, "="']                \end{tikzcd}
\par\end{center}

\noindent \medskip{}
Now let $\mathscr{U}$ be any collection of open subsets of $Y$.
Then $\varphi^{-1}(\mathscr{U})=\{\varphi^{-1}(U)\mid U\in\mathscr{U}\}$
is a collection of open subsets of $X$; and since $\varphi$ is onto,
the mapping $U\mapsto\varphi^{-1}(U)$ is a one-to-one correspondence
between $\mathscr{U}$ and $\varphi^{-1}(\mathscr{U})$. Thus an isomorphism
$\varphi\Gamma_{X}(\mathscr{U})\to\Gamma_{X}(\varphi^{-1}(\mathscr{U}))$
is given by $\{\gamma_{U}\}\mapsto\{\gamma_{\varphi^{-1}(U)}\}$,
where $\gamma_{\varphi^{-1}(U)}=\gamma_{U}\in\varphi\Gamma_{X}(U)=\Gamma_{X}(\varphi^{-1}(U))$
for each $U\in\mathscr{U}$. Moreover, these isomorphisms of compatible
families commute with refinement homomorphisms: if $\mathscr{U}$,
$\mathscr{V}$ are collections of open subsets of $Y$ and $\mathscr{V}$
is a refinement of $\mathscr{U}$, then the following diagram commutes:\medskip{}

\noindent \begin{center}
\begin{tikzcd} \varphi \Gamma_X (\mathscr{U}) \arrow[r] \arrow[d, "r"'] & \Gamma_X(\varphi^{-1}(\mathscr{U})) \arrow[d, "r"] \arrow[l, "\cong"'] \\ \varphi \Gamma_X(\mathscr{V}) \arrow[r]                  & \Gamma_X(\varphi^{-1}(\mathscr{V})) \arrow[l, "\cong"']                \end{tikzcd}
\par\end{center}

\noindent \medskip{}
In particular, given an open subset $V$ of $Y$, we have compatible
maps between the direct systems $\{\varphi\Gamma_{X}(\mathscr{U})\}$
running over all open covers of $V$ in $Y$ and $\{\Gamma_{X}(\mathscr{W})\}$
running over all open covers of $\varphi^{-1}(V)$ in $X$. Note that
the latter system generally has more covers, in the sense that $\varphi^{-1}(\mathscr{U})$
is a cover of $\varphi^{-1}(V)$ for each cover $\mathscr{U}$ of
$V$, but not necessarily \emph{every} cover of $\varphi^{-1}(V)$
in $X$ is of that form. This determines a map: 
\[
(\varphi\Gamma_{X})^{+}(V)={\varinjlim\{\varphi\Gamma_{X}(\mathscr{U})\}\to\varinjlim\{\Gamma_{X}(\mathscr{W})\}}=\Gamma_{X}^{+}(\varphi^{-1}(V))=\varphi(\Gamma_{X}^{+})(V)
\]
Furthermore, these homomorphisms form a natural map of pre\-sheaves
$(\varphi\Gamma_{X})^{+}\to\varphi(\Gamma_{X}^{+})$.

\noindent \begin{lemma}\label{injectivepushforwardlemma}

Suppose that the presheaf on $\Gamma_{X}$ satisfies condition \textbf{(M)}.
Then $(\varphi\Gamma_{X})^{+}$ and $\varphi(\Gamma_{X}^{+})$ are
both sheaves on $Y$, and the natural map $(\varphi\Gamma_{X})^{+}\to\varphi(\Gamma_{X}^{+})$
is injective.

\noindent \end{lemma}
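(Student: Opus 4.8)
The plan is to reduce both sheaf claims to the criterion of Lemma~\ref{completionissheaflemma}(b) and to reduce injectivity to the equivalent form $\boldsymbol{(\text{M})}''$ of Lemma~\ref{(M)conditionslemma}, in every case exploiting the isomorphisms $\varphi\Gamma_X(\mathscr{U})\cong\Gamma_X(\varphi^{-1}(\mathscr{U}))$ that were already shown to commute with refinement maps. First I would check that the pushforward presheaf $\varphi\Gamma_X$ itself satisfies \textbf{(M)} as a presheaf on $Y$: given an open cover $\mathscr{U}$ of an open set $V\subseteq Y$, the family $\varphi^{-1}(\mathscr{U})$ is an open cover of $\varphi^{-1}(V)$ since $\varphi^{-1}$ commutes with unions, and under the identifications $\varphi\Gamma_X(V)=\Gamma_X(\varphi^{-1}(V))$ and $\varphi\Gamma_X(\mathscr{U})\cong\Gamma_X(\varphi^{-1}(\mathscr{U}))$ the restriction map $\varphi\Gamma_X(V)\to\varphi\Gamma_X(\mathscr{U})$ is identified with $\Gamma_X(\varphi^{-1}(V))\to\Gamma_X(\varphi^{-1}(\mathscr{U}))$, which is injective because $\Gamma_X$ satisfies \textbf{(M)}. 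Hence $\varphi\Gamma_X$ satisfies \textbf{(M)}, and Lemma~\ref{completionissheaflemma}(b) gives that $(\varphi\Gamma_X)^{+}$ is a sheaf on $Y$.

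For the second sheaf, note that since $\Gamma_X$ satisfies \textbf{(M)}, Lemma~\ref{completionissheaflemma}(b) already makes $\Gamma_X^{+}$ a sheaf on $X$, so it suffices to verify that the pushforward of a sheaf is again a sheaf. Condition \textbf{(M)} for $\varphi(\Gamma_X^{+})$ follows exactly as in the previous paragraph with $\Gamma_X^{+}$ in place of $\Gamma_X$. For the gluing condition \textbf{(G)} I would take a compatible $\mathscr{U}$ family of $\varphi(\Gamma_X^{+})$ over $V$ and transport it, via the identifications $\varphi(\Gamma_X^{+})(U)=\Gamma_X^{+}(\varphi^{-1}(U))$ together with the compatibility of intersections $\varphi^{-1}(U\cap U')=\varphi^{-1}(U)\cap\varphi^{-1}(U')$, into a compatible $\varphi^{-1}(\mathscr{U})$ family of the sheaf $\Gamma_X^{+}$; its glued section, living in $\Gamma_X^{+}(\varphi^{-1}(V))=\varphi(\Gamma_X^{+})(V)$, is the required element. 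Thus $\varphi(\Gamma_X^{+})$ is a sheaf.

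The heart of the matter is injectivity of the natural map $\Phi\colon(\varphi\Gamma_X)^{+}(V)\to\varphi(\Gamma_X^{+})(V)=\Gamma_X^{+}(\varphi^{-1}(V))$. Because both $\varphi\Gamma_X$ and $\Gamma_X$ satisfy \textbf{(M)}, the form $\boldsymbol{(\text{M})}''$ of Lemma~\ref{(M)conditionslemma} shows that for every cover $\mathscr{U}$ of $V$ the canonical map $\varphi\Gamma_X(\mathscr{U})\to(\varphi\Gamma_X)^{+}(V)$ is injective, and likewise $\Gamma_X(\varphi^{-1}(\mathscr{U}))\to\Gamma_X^{+}(\varphi^{-1}(V))$ is injective. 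By the construction of $\Phi$ as a map of direct limits, its restriction to the image of $\varphi\Gamma_X(\mathscr{U})$ factors as the isomorphism $\varphi\Gamma_X(\mathscr{U})\cong\Gamma_X(\varphi^{-1}(\mathscr{U}))$ followed by this second injection, and is therefore injective on each such piece. Since a direct limit is the union of the images of its terms, every element of $(\varphi\Gamma_X)^{+}(V)$ lies in the image of a single $\varphi\Gamma_X(\mathscr{U})$; thus any element of $\ker\Phi$ lies in a piece on which $\Phi$ is injective and must vanish, whence $\Phi$ is injective.

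The step I expect to demand the most care is this last one. The target limit $\Gamma_X^{+}(\varphi^{-1}(V))$ is assembled from \emph{all} open covers of $\varphi^{-1}(V)$ in $X$, not only those of the form $\varphi^{-1}(\mathscr{U})$, so a naive cofinality comparison of the two direct systems is unavailable; the point is to avoid that comparison entirely and instead use the submodule description $(\varphi\Gamma_X)^{+}(V)=\bigcup_{\mathscr{U}}\varphi\Gamma_X(\mathscr{U})$ supplied by \textbf{(M)} to pin each element down at a single level of the left-hand system, where injectivity of $\Phi$ is transparent.
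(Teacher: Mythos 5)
Your proposal is correct and follows essentially the same route as the paper: the sheaf claims are reduced to Lemma~\ref{completionissheaflemma}(b) by observing that pushforward preserves \textbf{(M)} and \textbf{(G)}, and injectivity is obtained from condition $\boldsymbol{(\text{M})}''$ applied to the composition $\varphi\Gamma_{X}(\mathscr{U})\cong\Gamma_{X}(\varphi^{-1}(\mathscr{U}))\to\Gamma_{X}^{+}(\varphi^{-1}(V))$, then passed to the limit over $\mathscr{U}$. You merely spell out more explicitly the steps the paper compresses (including the correct observation that the two direct systems are not cofinal, which is exactly why only injectivity, not an isomorphism, is claimed).
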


\noindent \begin{proof}

It is easy to see that pushforwards preserve the conditions \textbf{(M)}
and \textbf{(G)}. Hence the first part follows from Lemma~\ref{completionissheaflemma}~(b).
For the second part, let $\mathscr{U}$ be an open cover of a subset
$V$ of $Y$. Then the composition 
\[
\varphi\Gamma_{X}(\mathscr{U})\stackrel{\cong}{\to}\Gamma_{X}(\varphi^{-1}(\mathscr{U}))\to\Gamma_{X}^{+}(\varphi^{-1}(V))=\varphi(\Gamma_{X}^{+})(V)
\]
is an injective homomorphism by condition $(\text{M})''$ of Lemma~\ref{(M)conditionslemma}.
Hence the homomorphism $(\varphi\Gamma_{X})^{+}(V)=\varinjlim_{\mathscr{U}}\varphi\Gamma_{X}(\mathscr{U})\to\varphi(\Gamma_{X}^{+})(V)$
is also injective. 

\noindent \end{proof}

\subsection*{\v{C}ech Cohomology:\protect \\
}

The standard definition of \v{C}ech Cohomology, as seen in \cite{MR1325242}
for instance, concerns itself only with presheaves on $X$; here we
extend this definition to presheaves on other bases, and show that
the usual theorems still hold in this context. 

Let $\mathcal{B}$ be a basis of $X$, $\Gamma$ a presheaf of abelian
groups on $\mathcal{B}$, and $\mathscr{U}$ an open cover of $X$
consisting of elements of $\mathcal{B}$. For $n\geq0$ define $\check{{C}}^{n}(\mathscr{U},\Gamma)$
to be the module of functions $f$ which assign to any ordered ${(n+1)}$\=/tuple
$U_{0},U_{1},...,U_{n}$ of elements of $\mathscr{U}$ an element
${f(U_{0},U_{1},...,U_{n})}$ in \\
${\Gamma(U_{0}\cap U_{1}\cap...\cap U_{n})}$. The coboundary ${\partial:\check{{C}}^{n}(\mathscr{U},\Gamma)\to\check{{C}}^{n+1}(\mathscr{U},\Gamma)}$
is defined by 
\[
{\partial f(U_{0},...,U_{n+1})}={\underset{i=0}{\overset{n+1}{\sum}}(-1)^{i}f(U_{0},...,\hat{U}_{i},...,U_{n+1})|_{(U_{0}\cap...\cap U_{n+1})}}
\]
 where $\hat{U}_{i}$ denotes omission of $U_{i}$. Since $\partial\partial=0$,
this makes $\check{{C}}^{*}(\mathscr{U},\Gamma)$ a cochain complex,
with cohomology groups $\check{{H}}^{*}(\mathscr{U},\Gamma)$.

Let $\mathscr{V}\subset\mathcal{B}$ be a refinement of $\mathscr{U}$
and $\lambda:\mathscr{V}\to\mathscr{U}$ a function such that $V\subset\lambda(V)$
for all $V\in\mathscr{V}$. This gives a cochain map ${\lambda^{*}:\check{{C}}^{*}(\mathscr{U},\Gamma)\to\check{{C}}^{*}(\mathscr{V},\Gamma)}$
defined as ${\lambda^{*}f(V_{0},...,V_{n})}{=f(\lambda(V_{0}),...,\lambda(V_{n}))|_{(V_{0}\cap...\cap V_{n})}}$.
If $\mu:\mathscr{V}\to\mathscr{U}$ is another such function, a cochain
homotopy \\
${D:\check{{C}}^{n}(\mathscr{U},\Gamma)\to\check{{C}}^{n-1}(\mathscr{V},\Gamma)}$
between $\lambda^{*}$ and $\mu^{*}$ is defined by 
\[
Df(V_{0},...,V_{n-1})=\underset{j=0}{\overset{n-1}{\sum}}(-1)^{j}f(\lambda(V_{0}),...,\lambda(V_{j}),\mu(V_{j}),...,\mu(V_{n-1}))|_{(V_{0}\cap...\cap V_{n-1})}
\]
Thus, there is a well-defined homomorphism $\lambda^{*}:\check{{H}}^{*}(\mathscr{U},\Gamma)\to\check{{H}}^{*}(\mathscr{V},\Gamma)$
with $\lambda^{*}[f]=[\lambda^{*}f]$ which is independent of the
choice of such $\lambda$. These maps $\lambda^{*}$ determined by
refinement make $\lbrace\check{{H}}^{*}(\mathscr{U},\Gamma):{\mathscr{U}\subset\mathcal{B}}$
$\text{ is an open cover of }X\rbrace$ a direct system, and the \emph{\v{C}ech
cohomology} of $X$ with coefficients in $\Gamma$ is defined to be
$\check{{H}}^{*}(X,\Gamma):=\underset{\mathscr{U}\subset\mathcal{B}}{\varinjlim}\check{{H}}^{*}(\mathscr{U},\Gamma)$.
Furthermore, if $A$ is an abelian group, then the \emph{\v{C}ech
cohomology of $X$ with coefficients in $A$} is defined as follows:
Take $\mathcal{B}$ to consist of \emph{all} open subsets of $X$
and $A_{X}$ to be the constant presheaf; then ${\check{{H}}^{*}(X,A):=\check{{H}}^{*}(X,A_{X})}$.

Given a basis $\mathcal{B}$ of $X$, an open cover in $\mathcal{B}$
\emph{indexed by} $X$ is a function $U:X\to\mathcal{B}$ where $x\in U(x)=U_{x}$
for each $x\in X$; the collection $\mathscr{U}=\{U_{x}:x\in X\}$
is clearly an open cover of $X$. For covers indexed by $X$ we have
a preorder $\leq$ defined by $\mathscr{U}\leq\mathscr{V}$ if and
only if $V_{x}\subset U_{x}$ for every $x\in X$. This makes $\mathscr{V}$
a refinement of $\mathscr{U}$, but is a stronger condition, and in
fact makes covers indexed by $X$ a directed partially ordered set.
Furthermore, if $\mathscr{V}$ is any open cover of $X$ and $\mathscr{U}$
is a cover indexed by $X$, we may choose for each $x$ some $V_{x}\in\mathscr{V}$
with $x\in V_{x}$ and then define a common refinement $\mathscr{W}=\{U_{x}\cap V_{x}:x\in X\}$
of both $\mathscr{U}$ and $\mathscr{V}$ which is also indexed by
$X$. This makes any direct limit determined by all open covers equal
to the direct limit determined by covers indexed by $X$, so that
in particular $\check{{H}}^{*}(X,\Gamma)\cong\underset{\mathscr{U}\text{ indexed by }X}{\varinjlim}\check{{H}}^{*}(\mathscr{U},\Gamma)$.

If $\mathscr{U}\leq\mathscr{V}$ are indexed by $X$, this gives a
map $\gamma:\mathscr{V}\to\mathscr{U}$ defined by $\gamma(V_{x})=U_{x}$
for each $x\in X$, which then induces a canonical cochain map $\gamma^{*}:\check{{C}}^{*}(\mathscr{U},\Gamma)\to\check{{C}}(\mathscr{V},\Gamma)$.
We may use this to define a cochain complex $\check{{C}}^{*}(X,\Gamma):=\underset{\mathscr{U}\text{ indexed by }X}{\varinjlim}\check{{C}}^{*}(\mathscr{U},\Gamma)$.
Since the direct limit functor is exact, it commutes with cohomology,
so $\underset{\mathscr{U}\text{ indexed by }X}{\varinjlim}\check{{H}}^{*}(\mathscr{U},\Gamma)$
is isomorphic to the cohomology of this cochain complex; that is,
$\check{{H}}^{*}(X,\Gamma)$ as defined above is isomorphic to the
cohomology groups of $\check{{C}}^{*}(X,\Gamma)$.

\begin{lemma}\label{exactsequencecechcohomologylemma}

For a basis $\mathcal{B}$ of $X$, there is a covariant functor from
the category of short exact sequences of presheaves on $\mathcal{B}$
to the category of exact sequences which assigns to any short exact
sequence $0\to\Gamma'\to\Gamma\to\Gamma''\to0$ of presheaves on $\mathcal{B}$
an exact sequence $...\to\check{{H}}^{n}(X,\Gamma')\to\check{{H}}^{n}(X,\Gamma)\to\check{{H}}^{n}(X,\Gamma'')\to\check{{H}}^{n+1}(X,\Gamma')\to...$

\end{lemma}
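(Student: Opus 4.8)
The plan is to reduce the statement to standard homological algebra by exhibiting, for each open cover, a short exact sequence of \v{C}ech cochain complexes and then passing to the direct limit. First I would fix an open cover $\mathscr{U}\subset\mathcal{B}$ indexed by $X$ and examine the cochain groups. By definition $\check{C}^{n}(\mathscr{U},\Gamma)$ is the product $\prod\Gamma(U_{0}\cap\cdots\cap U_{n})$ taken over all $(n+1)$\=/tuples from $\mathscr{U}$. Because $\mathcal{B}$ is closed under finite intersections, each $U_{0}\cap\cdots\cap U_{n}$ again lies in $\mathcal{B}$, so a short exact sequence $0\to\Gamma'\to\Gamma\to\Gamma''\to0$ of presheaves on $\mathcal{B}$ restricts to an exact sequence $0\to\Gamma'(U_{0}\cap\cdots\cap U_{n})\to\Gamma(U_{0}\cap\cdots\cap U_{n})\to\Gamma''(U_{0}\cap\cdots\cap U_{n})\to0$ at every such intersection. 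Since a product of exact sequences of abelian groups is exact, the induced maps give an exact sequence $0\to\check{C}^{n}(\mathscr{U},\Gamma')\to\check{C}^{n}(\mathscr{U},\Gamma)\to\check{C}^{n}(\mathscr{U},\Gamma'')\to0$ for each $n$. The coboundary $\partial$ is defined by the same formula on each presheaf and commutes with the maps induced by $\Gamma'\to\Gamma\to\Gamma''$, so these assemble into a short exact sequence of cochain complexes $0\to\check{C}^{*}(\mathscr{U},\Gamma')\to\check{C}^{*}(\mathscr{U},\Gamma)\to\check{C}^{*}(\mathscr{U},\Gamma'')\to0$.

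Next I would pass to the direct limit over covers indexed by $X$. The refinement cochain maps $\gamma^{*}$ are defined by the same recipe for each presheaf, so they are natural in the presheaf argument; hence the short exact sequence of complexes above is compatible with refinement. As the covers indexed by $X$ form a directed partially ordered set, the relevant colimit is filtered, and filtered colimits of abelian groups are exact; therefore applying $\varinjlim$ yields a short exact sequence of cochain complexes $0\to\check{C}^{*}(X,\Gamma')\to\check{C}^{*}(X,\Gamma)\to\check{C}^{*}(X,\Gamma'')\to0$. The zig-zag (long exact sequence) lemma applied to this short exact sequence of complexes produces the desired long exact sequence in cohomology, and since the direct limit commutes with cohomology (as noted above), its terms are exactly the groups $\check{H}^{n}(X,\Gamma')$, $\check{H}^{n}(X,\Gamma)$, and $\check{H}^{n}(X,\Gamma'')$. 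Equivalently, one could invoke the zig-zag lemma for each $\mathscr{U}$ first and then take the limit, again using exactness of filtered colimits to preserve the long exact sequence.

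For functoriality, a morphism of short exact sequences of presheaves is a commuting ladder of natural transformations; each vertical arrow induces a chain map on the complexes $\check{C}^{*}(\mathscr{U},-)$, compatibly with the horizontal short exact sequences and with refinement, hence a chain map on the limits $\check{C}^{*}(X,-)$ that respects the short exact sequence of complexes. By the naturality of the connecting homomorphism in the zig-zag lemma, this induces a morphism between the two associated long exact sequences. Identity morphisms clearly induce identities and composites induce composites, so the assignment is a covariant functor from short exact sequences of presheaves on $\mathcal{B}$ to exact sequences.

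The only step requiring genuine care is the surjectivity of $\check{C}^{n}(\mathscr{U},\Gamma)\to\check{C}^{n}(\mathscr{U},\Gamma'')$, i.e.\ right-exactness at the cochain level. This is precisely where the \v{C}ech formalism behaves better than taking sections: a \v{C}ech cochain is an \emph{unconstrained} function on tuples (it need satisfy no gluing or compatibility condition), so a preimage of a given $\Gamma''$\=/cochain may be chosen independently tuple by tuple, using surjectivity of $\Gamma\to\Gamma''$ on each intersection $U_{0}\cap\cdots\cap U_{n}\in\mathcal{B}$. Once this is in hand, the remainder is the naturality bookkeeping above together with the exactness of the filtered colimit, which is why it is essential to organize the limit over covers indexed by $X$ rather than over arbitrary covers.
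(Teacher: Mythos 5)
Your proposal is correct and takes essentially the same route as the paper's proof: for each open cover $\mathscr{U}\subset\mathcal{B}$ one obtains a short exact sequence of cochain complexes $0\to\check{C}^{*}(\mathscr{U},\Gamma')\to\check{C}^{*}(\mathscr{U},\Gamma)\to\check{C}^{*}(\mathscr{U},\Gamma'')\to0$, applies the long exact sequence, and passes to the direct limit over covers. You simply spell out the details the paper leaves implicit (levelwise surjectivity via unconstrained choice of preimages on each intersection, exactness of filtered colimits, and naturality for functoriality), all of which are sound.
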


\begin{proof}

For any open cover $\mathscr{U}\subset\mathcal{B}$ there is a short
exact sequence of cochain complexes $0\to\check{{C}}^{*}(\mathscr{U},\Gamma')\to\check{{C}}^{*}(\mathscr{U},\Gamma)\to\check{{C}}^{*}(\mathscr{U},\Gamma'')\to0$.
This gives an exact cohomology sequence for $\mathscr{U}$, and the
result follows from passing this to the direct limits defining the
\v{C}ech cohomology groups.\end{proof}

A presheaf of modules $\Gamma$ on $\mathcal{B}$ is \emph{locally
zero} if, for any $\gamma\in\Gamma(V)$ there exists an open cover
$\mathscr{U}\subset\mathcal{B}$ of $V$ with $\gamma|U=0$ for all
$U\in\mathscr{U}$; this is equivalent to $\Gamma^{+}$ being the
zero presheaf (which satisfies $\Gamma(U)=0$ for all $U$) and to
the condition that for every $x\in X$, the stalk $\Gamma_{x}=0$.
A homomorphism $\tau:\Gamma_{1}\to\Gamma_{2}$ between presheaves
on $X$ is called \emph{locally injective} if its kernel is locally
$0$, and a \emph{local isomorphism} if both its kernel and its cokernel
are locally $0$. An open cover on a topological space $X$ is \emph{locally
finite} if for every $x\in X$ there exists an open subset $U$ of
$X$ with $x\in U$ which has nonempty intersection with only finitely
many elements of the cover; a space $X$ is called \emph{paracompact}
if every open cover admits a refinement which is locally finite. In
particular, compact spaces are also paracompact. For an open cover
$\mathscr{U}$, let $\mathscr{U}^{*}:=\{U^{*}\}_{U\in\mathscr{U}}$
where $U^{*}=\cup\{U'\in\mathscr{U}:U'\cap U\neq\emptyset\}$. Another
open cover $\mathscr{V}$ is a \emph{star refinement} of $\mathscr{U}$
if $\mathscr{V}^{*}$ is a refinement of $\mathscr{U}$.

\begin{lemma}\label{locallyzeroiszerocechlemma}

If $X$ is a paracompact Hausdorff space with basis $\mathcal{B}$
and $\Gamma$ is a locally zero presheaf on $\mathcal{B}$, then $\check{{H}}^{*}(X,\Gamma)=0$.

\end{lemma}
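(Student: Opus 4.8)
The plan is to use the description $\check{H}^{n}(X,\Gamma)=\varinjlim_{\mathscr U}\check{H}^{n}(\mathscr U,\Gamma)$ and to show that every class dies after a suitable refinement. The degree $0$ case is immediate: $\check{H}^{0}(\mathscr U,\Gamma)=\Gamma(\mathscr U)$ is the module of compatible $\mathscr U$ families, so passing to the limit gives $\check{H}^{0}(X,\Gamma)=\Gamma^{+}(X)=0$, since $\Gamma$ being locally zero means exactly $\Gamma^{+}=0$. For $n\ge 1$ I would prove the stronger statement that any $n$-cocycle on a locally finite cover can be refined so that it becomes \emph{identically zero}, which certainly annihilates its class. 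Since $X$ is paracompact, every cover has a locally finite refinement, and refining only moves a class along a transition map of the direct system, so it is harmless to assume the representing cover $\mathscr U=\{U_i\}$ is locally finite.

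The heart of the argument is the construction of an ``$f$-adapted'' cover. First pass to a shrinking $\mathscr U'=\{U_i'\}$ with $\overline{U_i'}\subset U_i$ (available since a paracompact Hausdorff space is normal and locally finite open covers of normal spaces admit such shrinkings). Fix a point $z$. By local finiteness only finitely many members meet a neighborhood of $z$, and only finitely many contain $z$; and for each tuple $(U_{i_0},\dots,U_{i_n})$ of members all containing $z$, the value $f(U_{i_0},\dots,U_{i_n})$ lies in $\Gamma(U_{i_0}\cap\dots\cap U_{i_n})$, which by local-zeroness restricts to $0$ on some basis cover of that intersection, hence on some basis neighborhood of $z$. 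Intersecting these finitely many neighborhoods (the basis is closed under finite intersection), I obtain a basis neighborhood $W_z\ni z$ satisfying: (a) $W_z\subset U_i$ whenever $z\in U_i$; (b) $W_z\cap U_i'=\emptyset$ whenever $z\notin U_i$, arranged by shrinking $W_z$ inside the open sets $X\setminus\overline{U_i'}$ for the finitely many relevant $i$ (note $z\notin U_i$ forces $z\notin\overline{U_i'}$); and (c) $f(U_{i_0},\dots,U_{i_n})|_{W_z}=0$ for every tuple of members containing $z$. Properties (a) and (b) together give the key separation property $(\star)$: if $W_z\cap U_i'\neq\emptyset$ then $W_z\subset U_i$.

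To finish, I would take a star refinement $\mathscr V=\{V_x\}_{x\in X}$ of the cover $\{W_z\}$, which exists because a paracompact Hausdorff space is fully normal, i.e.\ every open cover has an open star refinement. Choose the refinement function $\lambda$ by letting $\lambda(x)$ be any index with $x\in U'_{\lambda(x)}$; since $V_x^{*}\subset W_z$ for some $z$ and $V_x\cap U'_{\lambda(x)}\ni x$, property $(\star)$ forces $V_x\subset W_z\subset U_{\lambda(x)}$, so $\lambda$ is a genuine refinement map. Now suppose $V_{x_0}\cap\dots\cap V_{x_n}\neq\emptyset$. Every $V_{x_k}$ meets $V_{x_0}$, so the star condition puts all of them inside a single $W_{z_0}$; then $V_{x_k}\cap U'_{\lambda(x_k)}\ne\emptyset$ gives $W_{z_0}\cap U'_{\lambda(x_k)}\ne\emptyset$, and $(\star)$ yields $z_0\in U_{\lambda(x_k)}$ for every $k$. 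Thus $(U_{\lambda(x_0)},\dots,U_{\lambda(x_n)})$ is a tuple of members all containing $z_0$, so by (c) the value $f(U_{\lambda(x_0)},\dots,U_{\lambda(x_n)})$ vanishes on $W_{z_0}\supset V_{x_0}\cap\dots\cap V_{x_n}$. Hence $\lambda^{*}f=0$, and $[f]=0$ in the limit.

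The main obstacle is precisely the coherence required in the middle step: a single point-by-point application of local-zeroness kills each value separately, but to kill the whole cocycle at once I must guarantee that whenever members of the final cover overlap, their chosen indices all lie in one controlled set $\{U_i:z_0\in U_i\}$ on which $f$ has already been trivialized. Securing this is exactly what forces the two uses of the full strength of paracompact Hausdorff-ness --- normality (to shrink and obtain the separation property $(\star)$) and full normality (to take the star refinement that collapses overlaps into a single $W_{z_0}$). Everything else is routine bookkeeping with restriction maps and direct limits.
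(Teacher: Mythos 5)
Your argument is correct, and its engine is the same as the paper's: pass to a locally finite cover (cofinal by paracompactness), use local finiteness to impose only finitely many vanishing conditions at each point when invoking local-zeroness, and then use a star refinement to guarantee that whenever members of the final cover overlap, all of their $\lambda$-images contain a single point $z_0$ at which $f$ has already been trivialized, so that $\lambda^{*}f=0$ identically (note your argument, like the paper's, never uses the cocycle condition, so you could drop the word ``cocycle''). The difference is in the order of operations and the resulting overhead. The paper star-refines $\mathscr{U}$ to $\mathscr{W}$ \emph{first}, chooses $V_{x}\subset W_{x}$ with the vanishing property, and defines $\lambda(V_{x})$ to be a member of $\mathscr{U}$ containing $W_{x}^{*}$; then $V_{x_{0}}\cap\dots\cap V_{x_{n}}\neq\emptyset$ gives $W_{x_{0}}\subset W_{x_{j}}^{*}\subset\lambda(V_{x_{j}})$, hence $x_{0}\in\lambda(V_{x_{j}})$ for all $j$, and the vanishing at $x_{0}$ finishes. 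This makes your shrinking $\mathscr{U}'$, the separation property $(\star)$, and hence the appeal to normality entirely unnecessary --- the star refinement alone already collapses overlaps into one trivializing neighborhood and certifies that $\lambda$ is a refinement map. Two small points to tidy in your version: you should record that the final cover $\mathscr{V}$ consists of elements of $\mathcal{B}$ (e.g.\ by further refining the star refinement by basis sets, which is still a star refinement), since $\check{C}^{*}(\mathscr{V},\Gamma)$ is only defined for covers in $\mathcal{B}$; and in arranging property (b) you are implicitly using that the closures $\overline{U_{i}'}$ form a locally finite family, which is true but worth saying.
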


\begin{proof}

Let $\mathscr{U}\subset\mathcal{B}$ be a locally finite open covering
of $X$ and $f$ an $n$-cochain of $\check{{C}}^{*}(\mathscr{U},\Gamma)$.
Let $\mathscr{W}$ be a locally finite open star refinement of $\mathscr{U}$.
For each $x\in X$, since $\Gamma$ is locally zero, there is an open
neighborhood $V_{x}\in\mathcal{B}$ contained in some element $W_{x}\in\mathscr{W}$
such that if $x\in U_{0}\cap...\cap U_{n}$ with $U_{0},...,U_{n}\in\mathscr{U}$,
then $V_{x}\subset U_{0}\cap...\cap U_{n}$ and $f(U_{0},...,U_{n})|V_{x}=0$;
this is only a finite number of conditions since $\mathscr{U}$ is
locally finite. Let $\mathscr{V}:=\{V_{x}\}_{x\in X}$ and define
$\lambda:\mathscr{V}\to\mathscr{U}$ so that for each $x\in X$, we
have $V_{x}\subset W_{x}\subset W_{x}^{*}\subset\lambda(V_{x})$.
Then, if $V_{x_{0}}\cap...\cap V_{x_{n}}\neq\emptyset$, it follows
that $V_{x_{0}}\subset W_{x_{j}}^{*}$ for each $j$ so that $V_{x_{0}}\subset\lambda(V_{x_{j}})$
for each $j$. Therefore, $f(\lambda(V_{x_{0}}),...,\lambda(V_{x_{n}}))|V_{x_{0}}=0$,
so $\lambda^{*}f=0$ in $C^{*}(\mathscr{V},\Gamma)$. Thus, $\check{{H}}^{n}(X,\Gamma)=0$
for all $n$.\end{proof}

\begin{corollary}\label{localisomorphismcorollary}

If $X$ is a paracompact Hausdorff space with basis $\mathcal{B}$
and ${\tau:\Gamma_{1}\to\Gamma_{2}}$ is a local isomorphism of presheaves
on $\mathcal{B}$, then the induced map ${\tau_{*}:\check{{H}}^{*}(X,\Gamma_{1})\to\check{{H}}^{*}(X,\Gamma_{2})}$
is an isomorphism.

\end{corollary}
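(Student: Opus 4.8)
The plan is to factor $\tau$ through its image and then play off the long exact sequence of Lemma~\ref{exactsequencecechcohomologylemma} against the vanishing result of Lemma~\ref{locallyzeroiszerocechlemma}. Since presheaves of modules on $\mathcal{B}$ form an abelian category in which kernels, images, and cokernels are all computed objectwise, I would first set $K:=\ker\tau$, $I:=\operatorname{im}\tau$, and $C:=\operatorname{coker}\tau$; concretely $K(U)=\ker(\tau_U)$, $I(U)=\operatorname{im}(\tau_U)$, and $C(U)=\Gamma_2(U)/\operatorname{im}(\tau_U)$, with the restriction maps inherited from $\Gamma_1$ and $\Gamma_2$. These are well defined as sub- and quotient presheaves precisely because $\tau$ is a natural transformation, so restriction maps carry kernels to kernels and images to images. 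By the definition of a local isomorphism, both $K$ and $C$ are locally zero.

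Next I would record the two short exact sequences of presheaves on $\mathcal{B}$
\[
0\to K\to\Gamma_1\xrightarrow{\,p\,}I\to0,\qquad 0\to I\xrightarrow{\,j\,}\Gamma_2\to C\to0,
\]
where $p$ is the corestriction of $\tau$ onto its image and $j$ is the inclusion, so that $\tau=j\circ p$. Applying Lemma~\ref{locallyzeroiszerocechlemma} to the locally zero presheaves $K$ and $C$—which is legitimate since $X$ is paracompact Hausdorff—gives $\check{H}^*(X,K)=0$ and $\check{H}^*(X,C)=0$.

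Then I would feed each short exact sequence into Lemma~\ref{exactsequencecechcohomologylemma}. For the first sequence the long exact sequence reads $\cdots\to\check{H}^n(X,K)\to\check{H}^n(X,\Gamma_1)\xrightarrow{p_*}\check{H}^n(X,I)\to\check{H}^{n+1}(X,K)\to\cdots$; since the flanking terms vanish, $p_*$ is an isomorphism in every degree. For the second sequence the long exact sequence reads $\cdots\to\check{H}^{n-1}(X,C)\to\check{H}^n(X,I)\xrightarrow{j_*}\check{H}^n(X,\Gamma_2)\to\check{H}^n(X,C)\to\cdots$; again the flanking terms vanish, so $j_*$ is an isomorphism in every degree. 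By functoriality of \v{C}ech cohomology we have $\tau_*=(j\circ p)_*=j_*\circ p_*$, whence $\tau_*$ is an isomorphism.

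The only point demanding genuine care—and the step I would expect to be the main obstacle—is the bookkeeping that makes this reduction rigorous: confirming that the objectwise kernel, image, and cokernel really are presheaves on $\mathcal{B}$ with the inherited restrictions (which rests solely on naturality of $\tau$), that ``local isomorphism'' of presheaves on $\mathcal{B}$ means exactly that $K$ and $C$ are locally zero in the sense of Lemma~\ref{locallyzeroiszerocechlemma}, and that the horizontal maps in the two long exact sequences are indeed the maps induced by $p$ and $j$, so that their composite recovers $\tau_*$. None of this is deep, but it is precisely what converts the local (stalkwise) hypothesis on $\tau$ into the global cohomological isomorphism.
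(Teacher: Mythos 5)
Your proof is correct and is exactly the argument the paper intends: the corollary is stated without proof as a consequence of Lemma~\ref{locallyzeroiszerocechlemma} together with Lemma~\ref{exactsequencecechcohomologylemma}, and factoring $\tau$ through its image into two short exact sequences with locally zero flanking terms is the standard way to combine them. The bookkeeping points you flag (objectwise kernel/image/cokernel, the meaning of ``local isomorphism,'' and $\tau_*=j_*\circ p_*$) are all handled correctly.
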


This gives rise to one important property of the map $\alpha:\Gamma\to(\Gamma^{+})_{\mathcal{B}}$:

\begin{corollary}\label{completioncechcorollary}

If $X$ is a paracompact Hausdorff space with basis $\mathcal{B}$
and $\Gamma$ is a presheaf on $\mathcal{B}$, then the natural homomorphism
${\alpha:\Gamma\to(\Gamma^{+})_{\mathcal{B}}}$ induces a cohomological
isomorphism ${\alpha_{*}:\check{{H}}^{*}(X,\Gamma)\to\check{{H}}^{*}(X,(\Gamma^{+})_{\mathcal{B}})}$.

\end{corollary}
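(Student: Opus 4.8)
The plan is to reduce the statement to Corollary~\ref{localisomorphismcorollary} by showing that the presheaf map $\alpha:\Gamma\to(\Gamma^{+})_{\mathcal{B}}$ is a \emph{local isomorphism} on $\mathcal{B}$, i.e.\ that both its kernel and its cokernel are locally zero. Once this is done, the hypothesis that $X$ is paracompact Hausdorff lets Corollary~\ref{localisomorphismcorollary} conclude at once that the induced map $\alpha_{*}$ on \v{C}ech cohomology is an isomorphism. Throughout, kernels and cokernels are formed objectwise, so $\ker\alpha$ and $\operatorname{coker}\alpha$ are again presheaves of modules on $\mathcal{B}$, and their restriction maps are inherited from those of $\Gamma$ and $\Gamma^{+}$.

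The kernel is locally zero essentially by the construction of $\Gamma^{+}$. Fix $V\in\mathcal{B}$ and $\gamma\in\ker[\alpha\colon\Gamma(V)\to\Gamma^{+}(V)]$. Since $\Gamma^{+}(V)=\varinjlim_{\mathscr{U}}\Gamma(\mathscr{U})$ and $\alpha(\gamma)$ is the class of the trivial family $\{\gamma\}$ over the cover $\{V\}$, the vanishing of $\alpha(\gamma)$ in the direct limit means precisely that there is an open cover $\mathscr{U}\subset\mathcal{B}$ of $V$ along which $\{\gamma\}$ maps to zero, that is, $\gamma|_{U}=0$ in $\Gamma(U)$ for every $U\in\mathscr{U}$. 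As $(\ker\alpha)(U)$ is a submodule of $\Gamma(U)$ with the inherited restriction, $\gamma|_{U}$ already vanishes in $\ker\alpha$ for each such $U$, so $\ker\alpha$ is locally zero.

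For the cokernel the crux is a restriction identity. Fix $V$ and an element $\bar\eta$ of $(\operatorname{coker}\alpha)(V)=\Gamma^{+}(V)/\alpha(\Gamma(V))$, lift it to $\eta\in\Gamma^{+}(V)$, and choose an open cover $\mathscr{U}\subset\mathcal{B}$ of $V$ together with a compatible family $\{\gamma_{U}\}_{U\in\mathscr{U}}\in\Gamma(\mathscr{U})$ representing $\eta$. I claim that $\eta|_{U_{0}}=\alpha(\gamma_{U_{0}})$ in $\Gamma^{+}(U_{0})$ for each fixed $U_{0}\in\mathscr{U}$. Since $\mathcal{B}$ is closed under finite intersection, the collection $\{U\cap U_{0}:U\in\mathscr{U}\}$ is an open cover of $U_{0}$ lying in $\mathcal{B}$; unwinding the definition of the restriction $\Gamma^{+}(V)\to\Gamma^{+}(U_{0})$ represents $\eta|_{U_{0}}$ by $\{\gamma_{U}|_{U\cap U_{0}}\}$ over this cover, while refining the trivial cover $\{U_{0}\}$ to it represents $\alpha(\gamma_{U_{0}})$ by $\{\gamma_{U_{0}}|_{U\cap U_{0}}\}$. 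Compatibility of $\{\gamma_{U}\}$ gives $\gamma_{U}|_{U\cap U_{0}}=\gamma_{U_{0}}|_{U\cap U_{0}}$ for every $U$, so the two families coincide and the claim follows. Hence $\eta|_{U_{0}}\in\alpha(\Gamma(U_{0}))$ and thus $\bar\eta|_{U_{0}}=0$ in $(\operatorname{coker}\alpha)(U_{0})$ for every $U_{0}\in\mathscr{U}$; as $\mathscr{U}$ covers $V$, the cokernel is locally zero.

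With both $\ker\alpha$ and $\operatorname{coker}\alpha$ locally zero, $\alpha$ is a local isomorphism and Corollary~\ref{localisomorphismcorollary} gives the result. I expect the main obstacle to be the restriction identity $\eta|_{U_{0}}=\alpha(\gamma_{U_{0}})$: everything hinges on correctly unwinding the definition of restriction in $\Gamma^{+}$ (passing to the refinement $\{U\cap U_{0}\}$, which is legitimate precisely because $\mathcal{B}$ is closed under finite intersection) and then invoking compatibility of the representing family. By contrast, the kernel computation and the final appeal to Corollary~\ref{localisomorphismcorollary} are routine.
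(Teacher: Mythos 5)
Your proposal is correct and follows essentially the same route as the paper: reduce to Corollary~\ref{localisomorphismcorollary} by checking that $\ker\alpha$ and $\operatorname{coker}\alpha$ are locally zero. The only difference is that you spell out the restriction identity $\eta|_{U_{0}}=\alpha(\gamma_{U_{0}})$ in detail, a step the paper's proof states without elaboration.
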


\begin{proof}

By Corollary~\ref{localisomorphismcorollary}, it suffices to show
that $\alpha$ is a local isomorphism. Let $\gamma\in\left[\ker(\alpha)\right](V)$;
then $\gamma\in\Gamma(V)$ and there exists an open covering $\mathscr{U}\subset\mathcal{B}$
of $V$ such that $\gamma|U=0$ for all $U\in\mathscr{U}$. Thus $\ker(\alpha)$
is locally zero.

On the other hand, let $\gamma'\in\left[\text{coker}(\alpha)\right](V)$.
Then there exists an open covering $\mathscr{U}\subset\mathcal{B}$
of $V$ and a compatible $\mathscr{U}$ family $\{\gamma_{U}\}_{U\in\mathscr{U}}$
representing $\gamma'$. For each $U\in\mathscr{U}$, $\gamma'|U$
is represented by $\gamma_{U}\in\alpha(\Gamma(U))$, so $\gamma'|U=0$,
and $\text{coker}(\alpha)$ is also locally zero.\end{proof}

Now, let $\Gamma$ be a presheaf on $X$, $\mathcal{B}$ a basis of
$X$, and $\Gamma_{\mathcal{B}}$ the restriction of $\Gamma$ to
$\mathcal{B}$. Since every open cover $\mathscr{U}$of $X$ has a
refinement $\mathscr{U}'\subset\mathcal{B}$, open covers contained
in $\mathcal{B}$ are cofinal in the collection of all open covers
of $X$. Hence:
\[
\check{{H}}^{*}(X,\Gamma)=\varinjlim\check{{H}}^{*}(\mathscr{U},\Gamma)=\underset{\mathscr{U}\subset\mathcal{B}}{\varinjlim}\check{{H}}^{*}(\mathscr{U},\Gamma)=\underset{\mathscr{U}\subset\mathcal{B}}{\varinjlim}\check{{H}}^{*}(\mathscr{U},\Gamma_{\mathcal{B}})=\check{{H}}^{*}(X,\Gamma_{\mathcal{B}})
\]

Thus, if $\Gamma$ is a presheaf on $X$, its \v{C}ech cohomology
is identical to that of any presheaf on any basis $\mathcal{B}$ to
which $\Gamma$ restricts identically. In this sense, \v{C}ech cohomology
does not really depend on the choice of a basis $\mathcal{B}$, and
unless otherwise specified will be assumed to use the basis consisting
of \emph{all} open subsets of $X$.

If $X$ is a paracompact Hausdorff space and $\Gamma$ is a presheaf
on a basis $\mathcal{B}$, then by Corollary~\ref{completioncechcorollary}
the map $\alpha:\Gamma\to(\Gamma^{+})_{\mathcal{B}}$ induces a natural
isomorphism on \v{C}ech cohomology, so that
\[
\check{{H}}^{*}(X,\Gamma)\cong\check{{H}}^{*}(X,(\Gamma^{+})_{\mathcal{B}})\cong\check{{H}}^{*}(X,\Gamma^{+})
\]

\noindent with the second isomorphism following from our observation
above. Furthermore, on applying this isomorphism twice, we find that:
\[
\check{{H}}^{*}(X,\Gamma)\cong\check{{H}}^{*}(X,\Gamma^{+})\cong\check{{H}}^{*}(X,\Gamma^{++})=\check{{H}}^{*}(X,\hat{\Gamma})
\]

\section{The Continuity Property of \v{C}ech Cohomology}

The usual definition of the continuity property for a cohomology theory
applies to situations where the coefficients of the theory are in
an abelian group or module. We will show that \v{C}ech cohomology
in fact satisfies an even \emph{stronger} version of that property
which can only be defined for a cohomology theory over sheaves of
modules. A few more definitions will be needed first.

Let $\{X_{i},\varphi_{ij}\}_{i\in I}$ be an inverse system of compact
Hausdorff topological spaces, with ${X:=\varprojlim X_{i}}$. We define
a \emph{system of sheaves} $\{\Gamma_{i}\}_{i\in I}$ on this inverse
system to consist of a sheaf of abelian groups or modules $\Gamma_{i}$
on each $X_{i}$ along with injective presheaf maps ${f_{ij}:\Gamma_{i}\to\varphi_{ij}\Gamma_{j}}$
whenever $i\preceq j$ such that:

\textbf{a.} $f_{ii}=\text{id}$ for all $i\in I$

\textbf{b.} $f_{jk}f_{ij}=f_{ik}$ whenever $i\preceq j\preceq k$

Note that whenever $i\preceq j\preceq k$ and $U\subset X_{i}$, we
have
\[
\varphi_{ij}\Gamma_{j}(U)=\Gamma_{j}(\varphi_{ij}^{-1}(U))\overset{f_{jk}}{\to}\varphi_{jk}\Gamma_{k}(\varphi_{ij}^{-1}(U))=\Gamma_{k}(\varphi_{ik}^{-1}(U))=\varphi_{ik}\Gamma_{k}(U)
\]
This defines a morphism $\varphi_{ij}\Gamma_{j}(U)\to\varphi_{ik}\Gamma_{k}(U)$
which will also be denoted $f_{jk}$; then for each $i$, the conditions
on the presheaf maps above make the collection ${\{\varphi_{ij}\Gamma_{j}:i\preceq j\}}$
a direct system.

Let $\mathcal{B}=\{\varphi_{i}^{-1}(U):i\in I\text{ and }U\subset X_{i}\text{ open}\}$
be the standard basis on $X$ as an inverse limit; observe that this
is closed under finite intersections of two or more elements. Then
a \emph{limiting partial presheaf} for the system of sheaves above
is a presheaf $\Gamma$ on $\mathcal{B}$ together with a map of presheaves
$f_{i}:\Gamma_{i}\to\varphi_{i}\Gamma$ for each $i\in I$ such that
$f_{j}f_{ij}=f_{i}$ whenever $i\preceq j$, $\varphi_{i}\Gamma=\underset{i\preceq j}{\varinjlim}\varphi_{ij}\Gamma_{j}$
for each $i$, and $f_{j}:\varphi_{ij}(\Gamma_{j})\to\varphi_{i}(\Gamma)$
is the canonical map into this limit. Such a presheaf exists and is
unique up to isomorphisms of presheaves on $\mathcal{B}$. Similar
to the above, whenever $i\preceq j$, $f_{j}$ gives a map $\varphi_{ij}\Gamma_{j}\to\varphi_{i}\Gamma$
of presheaves on $X_{i}$, which will also be denoted $f_{j}$.

Fix $i\in I$ and let $\mathscr{U}$ be an open cover of $X_{i}$.
Whenever ${i\preceq j}$, ${\varphi_{ij}^{-1}(\mathscr{U}):=\{\varphi_{ij}^{-1}(U):U\in\mathscr{U}\}}$
is an open cover of $X_{j}$, so we may define a map ${\varphi_{ij}^{\#}:\check{{C}}^{*}(\mathscr{U},\varphi_{ij}\Gamma_{j})\to\check{{C}}^{*}(\varphi_{ij}^{-1}(\mathscr{U}),\Gamma_{j})}$
by 
\[
{(\varphi_{ij}^{\#}\theta)(\varphi_{ij}^{-1}(U_{0}),...,\varphi_{ij}^{-1}(U_{n})):=\theta(U_{0},...,U_{n})}
\]
This is well-defined since ${\varphi_{ij}^{-1}(U_{0}\cap...\cap U_{n})}={\varphi_{ij}^{-1}(U_{0})\cap...\cap\varphi_{ij}^{-1}(U_{n})}$,
so ${\varphi_{ij}\Gamma_{j}(U_{0}\cap...\cap U_{n})}={\Gamma_{j}(\varphi_{ij}^{-1}(U_{0})\cap...\cap\varphi_{ij}^{-1}(U_{n}))}$.
Indeed, $\varphi_{ij}^{\#}$ is clearly an isomorphism of chain complexes.
Then, if ${i\preceq j\preceq k}$ the map ${f_{jk}:\varphi_{ij}\Gamma_{j}\to\varphi_{ik}\Gamma_{k}}$
induces a cochain map ${f_{jk}:\check{{C}}^{*}(\mathscr{U},\varphi_{ij}\Gamma_{j})\to\check{{C}}^{*}(\mathscr{U},\varphi_{ik}\Gamma_{k})}$.
This determines a map ${\psi_{jk}:\check{{C}}^{*}(\varphi_{ij}^{-1}(\mathscr{U}),\Gamma_{j})\to\check{{C}}^{*}(\varphi_{ik}^{-1}(\mathscr{U}),\Gamma_{k})}$
so that the following diagram commutes:\medskip{}

\noindent \begin{center}
\begin{tikzcd} {\check{{C}}^{*}(\mathscr{U},\varphi_{ij}\Gamma_{j})} \arrow[r, "f_{jk}"] \arrow[d, "\varphi_{ij}^{\#}"'] & {\check{{C}}^{*}(\mathscr{U},\varphi_{ik}\Gamma_{k})} \arrow[d, "\varphi_{ik}^{\#}"] \\ {\check{{C}}^{*}(\varphi_{ij}^{-1}(\mathscr{U}),\Gamma_{j})} \arrow[r, "\psi_{jk}"']                      & {\check{{C}}^{*}(\varphi_{ik}^{-1}(\mathscr{U}),\Gamma_{k})}                         \end{tikzcd}
\par\end{center}

\noindent \medskip{}

These cochain maps form isomorphic direct systems $\{\check{{C}}^{*}(\mathscr{U},\varphi_{ij}\Gamma_{j}),f_{jk}\}_{i\preceq j}$
and $\{\check{{C}}^{*}(\varphi_{ij}^{-1}(\mathscr{U}),\Gamma_{j}),\psi_{jk}\}_{i\preceq j}$.
For similar definitions of $\varphi_{i}^{\#}$ and $\psi_{j}$, we
obtain cochain maps forming another commutative diagram for $\Gamma$
and $X$ whenever $i\preceq j$:\medskip{}

\noindent \begin{center}
\begin{tikzcd} {\check{{C}}^{*}(\mathscr{U},\varphi_{ij}\Gamma_{j})} \arrow[r, "f_{j}"] \arrow[d, "\varphi_{ij}^{\#}"'] & {\check{{C}}^{*}(\mathscr{U},\varphi_{i}\Gamma)} \arrow[d, "\varphi_{i}^{\#}"] \\ {\check{{C}}^{*}(\varphi_{ij}^{-1}(\mathscr{U}),\Gamma_{j})} \arrow[r, "\psi_{j}"']                      & {\check{{C}}^{*}(\varphi_{i}^{-1}(\mathscr{U}),\Gamma)}                        \end{tikzcd}
\par\end{center}

\noindent \medskip{}

Here $\varphi_{i}^{\#}$ is again an isomorphism, as before. Moreover,
for ${i\preceq j}$ the maps $f_{j}$ are compatible with the first
direct system above, so they determine a cochain map ${f:\underset{i\preceq j}{\varinjlim}\check{{C}}^{n}(\mathscr{U},\varphi_{ij}\Gamma_{j})\to\check{C}^{n}(\mathscr{U},\varphi_{i}(\Gamma))}$.
Similarly, the maps $\psi_{j}$ are compatible with the second direct
system and give a cochain map ${\psi:\underset{i\preceq j}{\varinjlim}\check{{C}}^{n}(\varphi_{ij}^{-1}(\mathscr{U}),\Gamma_{j})\to\check{C}^{n}(\varphi_{i}^{-1}(\mathscr{U}),\Gamma)}$
such that $f$ is an isomorphism if and only if $\psi$ is.

\begin{lemma}\label{Cechcontinuitylemma1}

For each $i\in I$ and finite open cover $\mathscr{U}$ of $X_{i}$,
the maps $f$ and $\psi$ are isomorphisms of chain complexes.

\end{lemma}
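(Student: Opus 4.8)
The plan is to reduce everything to a single degree-wise computation and then exploit the finiteness of $\mathscr{U}$. Since the vertical maps $\varphi_{ij}^{\#}$ and $\varphi_{i}^{\#}$ are already isomorphisms of chain complexes compatible with the two direct systems, it was observed just above that $f$ is an isomorphism if and only if $\psi$ is; so I would prove the statement for $f$ and let $\psi$ come along for free. Because $f$ is already a cochain map, it suffices to show that in each fixed degree $n$ the homomorphism
\[
f\colon\varinjlim_{i\preceq j}\check{{C}}^{n}(\mathscr{U},\varphi_{ij}\Gamma_{j})\longrightarrow\check{{C}}^{n}(\mathscr{U},\varphi_{i}\Gamma)
\]
is bijective.

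The key observation is that, $\mathscr{U}$ being finite, there are only finitely many ordered $(n+1)$\=/tuples $(U_{0},\dots,U_{n})$ of members of $\mathscr{U}$, and by definition an $n$\=/cochain is just a choice of one element of the relevant presheaf value for each such tuple. Thus, for every $j\succeq i$,
\[
\check{{C}}^{n}(\mathscr{U},\varphi_{ij}\Gamma_{j})=\prod_{(U_{0},\dots,U_{n})}\varphi_{ij}\Gamma_{j}(U_{0}\cap\cdots\cap U_{n}),
\]
a \emph{finite} product (hence also a finite direct sum), and similarly for $\varphi_{i}\Gamma$. The index set $\{j:i\preceq j\}$ is directed since $I$ is, so the colimit above is filtered; and filtered colimits of modules commute with finite products. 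This yields a natural identification
\[
\varinjlim_{i\preceq j}\check{{C}}^{n}(\mathscr{U},\varphi_{ij}\Gamma_{j})\cong\prod_{(U_{0},\dots,U_{n})}\varinjlim_{i\preceq j}\varphi_{ij}\Gamma_{j}(U_{0}\cap\cdots\cap U_{n}).
\]
Finally, since $\varphi_{i}\Gamma$ is by definition the object-wise direct limit $\varinjlim_{i\preceq j}\varphi_{ij}\Gamma_{j}$ of presheaves on $X_{i}$, each inner limit is exactly $\varphi_{i}\Gamma(U_{0}\cap\cdots\cap U_{n})$ with structure maps given precisely by the $f_{j}$; so the right-hand side is $\check{{C}}^{n}(\mathscr{U},\varphi_{i}\Gamma)$ and the displayed identification is $f$ itself. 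Hence $f$ is an isomorphism in every degree, and therefore so is $\psi$.

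The one step that genuinely requires care, and the whole reason the hypothesis is stated for \emph{finite} covers, is the commutation of the filtered colimit with the product: this holds for finite products but fails for infinite ones, so the finiteness of $\mathscr{U}$ is exactly what makes the argument run. Everything else is bookkeeping---unwinding the cochain modules as products of presheaf values over tuples, and invoking the object-wise construction of the limiting presheaf $\varphi_{i}\Gamma$ to identify the factor-wise colimits.
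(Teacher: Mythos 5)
Your proof is correct and is essentially the paper's argument in categorical packaging: the paper verifies injectivity and surjectivity of $f$ by hand, in each case using the finiteness of $\mathscr{U}^{n+1}$ to choose a common upper bound $k\succeq j$ in the directed set, which is precisely the element-level proof that filtered colimits of modules commute with the finite product $\check{{C}}^{n}(\mathscr{U},\Gamma)=\prod_{(U_{0},\dots,U_{n})}\Gamma(U_{0}\cap\cdots\cap U_{n})$. You correctly identify both the reduction to $f$ and the exact role of the finiteness hypothesis, so nothing is missing.
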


\begin{proof}It suffices to show that $f$ is an isomorphism. Suppose
that ${f([\theta])=0}$ where $[\theta]$ is represented by some ${\theta\in\check{C}^{n}(\mathscr{U},\varphi_{ij}\Gamma_{j})}$.
Then ${f_{j}(\theta)=0}$, so for every $(n+1)$\=/tuple ${U:=(U_{0},...,U_{n})}$
of elements of $\mathscr{U}$, ${(f_{j}\theta)(U)=0}$ in ${f_{j}(\varphi_{ij}\Gamma_{j}(U_{0}\cap...\cap U_{n}))\subset\varphi_{i}\Gamma(U_{0}\cap...\cap U_{n})}$.
But ${\varphi_{i}\Gamma=\underset{i\preceq j}{\varinjlim}\varphi_{ij}\Gamma_{j}}$,
so for each such ${U\in\mathscr{U}^{n+1}}$ there exists some ${k_{U}\succeq j}$
with ${(f_{jk_{U}}\theta)(U)=0}$ in ${\varphi_{ik_{U}}\Gamma_{k_{U}}(U_{0}\cap...\cap U_{n})}$.
Since $\mathscr{U}$ is finite, so is $\mathscr{U}^{n+1}$, so there
exists ${k\in I}$ with ${k\succeq k_{U}}$ for \emph{every} ${U\in\mathscr{U}^{n+1}}$.
Then ${(f_{jk}\theta)(U)}={(f_{k_{U}k}f_{jk_{U}}\theta)(U)}=0$ for
\emph{all }${U\in\mathscr{U}^{n+1}}$, so that ${f_{jk}(\theta)=0}$
in $\check{C}(\mathscr{U},\varphi_{ik}\Gamma_{k})$. Thus ${[\theta]=0}$
in $\underset{i\preceq j}{\varinjlim}\check{{C}}^{n}(\mathscr{U},\varphi_{ij}\Gamma_{j})$,
and ${\ker f=0}$.

Next, let ${\zeta\in\check{C}^{n}(\mathscr{U},\varphi_{i}\Gamma)}$.
For each ${U\in\mathscr{U}^{n+1}}$, ${\zeta(U)\in\varphi_{i}\Gamma(U_{0}\cap...\cap U_{n})}$
and ${\varphi_{i}\Gamma=\underset{i\preceq j}{\varinjlim}\varphi_{ij}\Gamma_{j}}$.
Thus there exists ${j_{U}\succeq i}$ and ${\gamma_{U}\in\varphi_{ij_{U}}\Gamma_{j_{U}}(U_{0}\cap...\cap U_{n})}$
with ${f_{j_{U}}(\gamma_{U})=\zeta(U)}$. Again, since $\mathscr{U}^{n+1}$
is finite, we may choose ${k\in I}$ with ${k\succeq j_{U}}$ for
all ${U\in\mathscr{U}^{n+1}}$. Then let ${\theta\in\check{C}(\mathscr{U},\varphi_{ik}\Gamma_{k})}$
be defined by ${\theta(U):=f_{j_{U}k}(\gamma_{U})\in\varphi_{ik}\Gamma_{k}(U_{0}\cap...\cap U_{n})}$,
which makes $(f_{k}\theta)(U)=f_{k}f_{j_{U}k}(\gamma_{U})=f_{j_{U}}(\gamma_{U})=\zeta(U)$
for every ${U\in\mathscr{U}^{n+1}}$. By definition, then, ${f_{k}(\theta)=\zeta}$,
so $\zeta$ is in the image of $f$. Hence $f$ is surjective, and
therefore an isomorphism.\end{proof}

The cochain maps $\psi_{jk}$ induce maps on cohomology \\
${\psi_{jk}:\check{H}^{n}(\varphi_{ij}^{-1}(\mathscr{U}),\Gamma_{j})\to\check{H}(\varphi_{ik}^{-1}(\mathscr{U}),\Gamma_{k})}$
which form a direct system of abelian groups $\{\check{H}^{*}(\varphi_{ij}^{-1}(\mathscr{U}),\Gamma_{j}),\psi_{jk}\}_{i\preceq j}$.
Similarly, the cochain maps $\psi_{j}$ induce a compatible family
of morphisms ${\psi_{j}:\check{H}^{n}(\varphi_{ij}^{-1}(\mathscr{U}),\Gamma_{j})\to\check{H}^{n}(\varphi_{i}^{-1}(\mathscr{U}),\Gamma)}$
which determine a map ${\psi:\underset{i\preceq j}{\varinjlim}\check{H}^{n}(\varphi_{ij}^{-1}(\mathscr{U}),\Gamma_{j})\to\check{H}^{n}(\varphi_{i}^{-1}(\mathscr{U}),\Gamma)}$.
From Lemma~\ref{Cechcontinuitylemma1} we immediately obtain:

\begin{corollary}\label{Cechcontinuitycorollary1}

The map $\psi$ on cohomology is an isomorphism for all ${n\in\mathbb{N}}$.

\end{corollary}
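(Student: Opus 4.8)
The plan is to deduce the corollary directly from Lemma~\ref{Cechcontinuitylemma1} by passing from the cochain level to cohomology. The central point is that the map $\psi$ named in the corollary is nothing other than the map induced on cohomology by the cochain isomorphism $\psi$ of Lemma~\ref{Cechcontinuitylemma1}, once the cohomology of its source complex is identified with the direct limit of cohomologies.

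First I would invoke Lemma~\ref{Cechcontinuitylemma1}, which gives that the cochain map
\[
\psi:\underset{i\preceq j}{\varinjlim}\,\check{C}^{*}(\varphi_{ij}^{-1}(\mathscr{U}),\Gamma_{j})\to\check{C}^{*}(\varphi_{i}^{-1}(\mathscr{U}),\Gamma)
\]
is an isomorphism of cochain complexes. Any isomorphism of cochain complexes commutes with the coboundary maps by definition, and therefore induces an isomorphism on cohomology in every degree $n$.

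Next I would identify the cohomology of the source complex. Because the direct limit functor over the directed set $\{j:i\preceq j\}$ is exact, it commutes with the formation of cohomology, exactly as in the identification of the cohomology of $\check{C}^{*}(X,\Gamma)$ with $\check{H}^{*}(X,\Gamma)$ recorded earlier. This yields
\[
\check{H}^{n}\Big(\underset{i\preceq j}{\varinjlim}\,\check{C}^{*}(\varphi_{ij}^{-1}(\mathscr{U}),\Gamma_{j})\Big)\cong\underset{i\preceq j}{\varinjlim}\,\check{H}^{n}(\varphi_{ij}^{-1}(\mathscr{U}),\Gamma_{j}).
\]
Under this isomorphism, the cohomology map induced by the cochain isomorphism $\psi$ coincides with the map $\psi$ assembled from the compatible family $\{\psi_{j}\}$ in the statement. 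Combining the two steps shows that $\psi$ on cohomology is an isomorphism for every $n$.

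I expect no genuine obstacle, since all of the real content was already established in Lemma~\ref{Cechcontinuitylemma1}. The only matter requiring a little care is the verification that the induced cohomology map really does agree with the map $\psi$ built from $\{\psi_{j}\}$; this is a routine consequence of the commutativity of the squares relating the $\psi_{jk}$, the $\psi_{j}$, and the canonical maps into the direct limit, together with the compatibility of inducing-on-cohomology with passing-to-the-limit.
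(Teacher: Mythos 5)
Your proposal is correct and matches the paper's (implicit) argument: the paper derives the corollary ``immediately'' from Lemma~\ref{Cechcontinuitylemma1} in exactly the way you describe, namely that a cochain isomorphism induces isomorphisms on cohomology and that the exact direct-limit functor commutes with cohomology, identifying the cohomology of the limit complex with $\varinjlim_{i\preceq j}\check{H}^{n}(\varphi_{ij}^{-1}(\mathscr{U}),\Gamma_{j})$. Your added care in checking that the induced map agrees with the map assembled from the $\psi_{j}$ is a reasonable explicit version of what the paper leaves unstated.
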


For each ${j\succeq i}$ and open cover $\mathscr{U}$ of $X_{i}$,
define ${h_{ij}:\check{H}^{n}(X_{i},\Gamma_{i})\to\check{H}^{n}(X_{j},\Gamma_{j})}$
to be the direct limit of the maps ${\psi_{ij}=\varphi_{ij}^{*}f_{ij}:\check{H}(\mathscr{U},\Gamma_{i})\to\check{H}^{n}(\varphi_{ij}^{-1}(\mathscr{U}),\Gamma_{j})}$
over the collection of all open covers $\mathscr{U}$ of $X_{i}$.
These maps form a direct system of abelian groups $\{\check{H}^{n}(X_{i},\Gamma_{i}),h_{ij}\}_{i\in I}$.
Furthermore, by taking the direct limit of the morphisms ${\psi_{i}=\varphi_{i}^{*}f_{i}:\check{H}^{n}(\mathscr{U},\Gamma_{i})\to\check{H}^{n}(\varphi_{i}^{-1}(\mathscr{U}),\Gamma)}$
over the same collection we obtain a compatible family of morphisms
${h_{i}:\check{H}^{n}(X_{i},\Gamma_{i})\to\check{H}(X,\Gamma)}$ for
each ${i\in I}$. This family of maps determines a homomorphism ${h:\varinjlim\check{H}^{n}(X_{i},\Gamma_{i})\to\check{H}^{n}(X,\Gamma)}$.

\begin{theorem}\label{Cechcontinuitytheorem1}

If $\{X_{i},\varphi_{ij}\}_{i\in I}$ is a surjective inverse system
of compact Hausdorff spaces, then $h$ is an isomorphism for all $n$.

\end{theorem}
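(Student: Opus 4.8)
The plan is to identify both sides with a single (flattened) double direct limit and then to check that the abstract comparison map produced in this way is exactly $h$. The one genuinely new ingredient beyond Corollary~\ref{Cechcontinuitycorollary1} is a cofinality statement, so I would first record that $X=\varprojlim X_i$ is compact Hausdorff (a closed subspace of the product, by Tychonoff), and that, because the bonding maps are surjective, a standard finite-intersection-property argument shows every projection $\varphi_i\colon X\to X_i$ is onto. Using surjectivity, a family $\varphi_i^{-1}(\mathscr U)$ covers $X$ if and only if $\mathscr U$ covers $X_i$; and using compactness, given any open cover $\mathscr V\subset\mathcal B$ of $X$ I may extract a finite subcover, push its finitely many indices up to a common $i\in I$, and thereby refine $\mathscr V$ by a cover of the form $\varphi_i^{-1}(\mathscr U)$ with $\mathscr U$ a \emph{finite} open cover of $X_i$. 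Hence the covers $\varphi_i^{-1}(\mathscr U)$, indexed by pairs $(i,\mathscr U)$, are cofinal among open covers of $X$ in $\mathcal B$, so that
\[
\check H^n(X,\Gamma)\;=\;\varinjlim_{(i,\mathscr U)}\check H^n(\varphi_i^{-1}(\mathscr U),\Gamma).
\]

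Next I would feed Corollary~\ref{Cechcontinuitycorollary1} into each term: for every such pair, $\psi$ is an isomorphism $\varinjlim_{j\succeq i}\check H^n(\varphi_{ij}^{-1}(\mathscr U),\Gamma_j)\xrightarrow{\ \cong\ }\check H^n(\varphi_i^{-1}(\mathscr U),\Gamma)$, and these isomorphisms are natural with respect to enlarging $i$ and refining $\mathscr U$. Substituting gives
\[
\check H^n(X,\Gamma)\;\cong\;\varinjlim_{(i,\mathscr U)}\;\varinjlim_{j\succeq i}\check H^n(\varphi_{ij}^{-1}(\mathscr U),\Gamma_j).
\]
The final step is to flatten this iterated limit and compare it with $\varinjlim_j\check H^n(X_j,\Gamma_j)=\varinjlim_{(j,\mathscr W)}\check H^n(\mathscr W,\Gamma_j)$. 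This is a cofinality argument on index sets: the inner terms are Čech groups $\check H^n(\mathscr W,\Gamma_j)$ for covers $\mathscr W=\varphi_{ij}^{-1}(\mathscr U)$ of $X_j$, and since the choice $i=j$ (with $\mathscr U=\mathscr W$ and $\varphi_{jj}^{-1}(\mathscr W)=\mathscr W$) already realizes \emph{every} finite cover of $X_j$, the triples $(i,\mathscr U,j)$ map cofinally onto the pairs $(j,\mathscr W)$. Checking that the transition maps correspond under this assignment — the internal transitions $\psi_{jk}$ matching the refinement maps $h_{jk}$ of the outer system — then yields $\check H^n(X,\Gamma)\cong\varinjlim_j\check H^n(X_j,\Gamma_j)$.

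It remains to verify that the isomorphism just constructed is the map $h$ of the statement rather than merely some isomorphism of the two limits. This is a diagram chase: on the $j=i$ stratum of the system of Corollary~\ref{Cechcontinuitycorollary1} one has $f_{ii}=\mathrm{id}$, so $\psi$ restricted there is precisely $\psi_i=\varphi_i^* f_i$, which is the data defining $h_i$; and the two incarnations of $f_j$ — the presheaf map $\Gamma_j\to\varphi_j\Gamma$ and its pushforward $\varphi_{ij}\Gamma_j\to\varphi_i\Gamma$ — agree because pushforwards compose along $\varphi_i=\varphi_{ij}\varphi_j$, so the identifications are compatible with the transition maps $h_{ij}$. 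Equivalently, one may argue bijectivity of $h$ by hand: surjectivity, since any class on $X$ is $\psi([\eta])$ for some $[\eta]\in\check H^n(\varphi_{ij}^{-1}(\mathscr U),\Gamma_j)$, whose image in $\check H^n(X_j,\Gamma_j)$ then maps to it under $h$; and injectivity, since a class killed by $h$ becomes zero on some refinement $\varphi_{i'}^{-1}(\mathscr U')$, which by the naturality of $\psi$ forces it to vanish in $\check H^n(X_k,\Gamma_k)$ at a finite stage $k\succeq i$. The main obstacle I anticipate is organizational rather than conceptual: keeping the directed-set bookkeeping honest while flattening the double limit, and confirming that every comparison map really is the one induced by $\psi$, so that the resulting isomorphism is genuinely $h$.
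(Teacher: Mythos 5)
Your proposal is correct, and at bottom it is the same argument as the paper's: both rest on Corollary~\ref{Cechcontinuitycorollary1} together with the compactness\-/based cofinality of the covers $\varphi_i^{-1}(\mathscr{U})$, with $\mathscr{U}$ a \emph{finite} open cover of $X_i$, among all open covers of $X$ in $\mathcal{B}$. The difference is one of packaging. The paper checks injectivity and surjectivity of $h$ element by element; your primary presentation instead identifies both sides with a flattened double direct limit and then verifies that the comparison map is $h$, while your closing ``by hand'' paragraph is essentially the paper's proof verbatim. Your route buys two things the paper leaves implicit: an explicit proof that the projections $\varphi_i$ are surjective (needed both for the pushforwards $\varphi_i\Gamma$ of Section~1 to behave as claimed and for a finite subcover drawn from $\mathcal{B}$ to descend to a finite cover of some $X_i$), and an explicit statement of the cofinality fact that the paper invokes without comment when it says every class of $\check{H}^n(X,\Gamma)$ is represented on some $\varphi_i^{-1}(\mathscr{U})$. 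The cost is the directed\-/set bookkeeping of the flattening, which you rightly flag as the main remaining labor. One small caution on your injectivity sketch: the refining cover $\varphi_{i'}^{-1}(\mathscr{U}')$ on which the class dies lives over an index $i'$ a priori unrelated to $i$, so before anything can be concluded you must pass to a common upper bound $k\succeq i,i'$ and a finite common refinement $\mathscr{W}$ on $X_k$, and then apply the \emph{injectivity} half of Corollary~\ref{Cechcontinuitycorollary1} over $\varphi_k^{-1}(\mathscr{W})$ a second time to locate the finite stage $\ell\succeq k$ at which the class vanishes; ``naturality of $\psi$'' alone does not finish this step, though in your double\-/limit formulation the same content is absorbed into applying the corollary termwise.
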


\begin{proof}

To see that $h$ is injective, let ${\theta\in\check{H}^{n}(X_{i},\Gamma_{i})}$
satisfy ${h_{i}(\theta)=0}$. Since $X_{i}$ is compact, the collection
of finite open covers is cofinal for the system determining $\check{H}^{n}(X_{i},\Gamma_{i})$,
so $\theta$ is represented by a cohomology class ${[\theta_{i}]\in\check{H}^{n}(\mathscr{U},\Gamma_{i})}$
for some finite open cover $\mathscr{U}$ of $X_{i}$. Therefore,
${\psi_{i}([\theta_{i}])\in\check{H}^{n}(\varphi_{i}^{-1}(\mathscr{U}),\Gamma)}$
is taken to $0$ in the limit $\check{H}^{n}(X,\Gamma)$, so there
exists some ${j\in I}$ and finite open cover $\mathscr{V}$ of $X_{j}$
such that $\varphi_{j}^{-1}(\mathscr{V})$ is a refinement of $\varphi_{i}^{-1}(\mathscr{U})$
and the restriction map ${\check{H}^{n}(\varphi_{i}^{-1}(\mathscr{U}),\Gamma)\to\check{H}^{n}(\varphi_{j}^{-1}(\mathscr{V}),\Gamma)}$
takes $\psi_{i}([\theta_{i}])$ to $0$. 

Now, choose some ${k\succeq i,j}$. Since $\varphi_{ik}^{-1}(\mathscr{U})$
and $\varphi_{jk}^{-1}(\mathscr{V})$ are open covers of $X_{k}$
and $X_{k}$ is compact, there exists a finite open cover $\mathscr{W}$
of $X_{k}$ which is a common refinement of both $\varphi_{ik}^{-1}(\mathscr{U})$
and $\varphi_{jk}^{-1}(\mathscr{V})$. Thus $\varphi_{k}^{-1}(\mathscr{W})$
is a refinement of ${\varphi_{k}^{-1}(\varphi_{jk}^{-1}(\mathscr{V}))=\varphi_{k}^{-1}(\mathscr{V})}$,
hence a refinement of $\varphi_{i}^{-1}(\mathscr{U})$ also, giving
the following commutative diagram, where $r$ is the restriction map:\medskip{}

\noindent \begin{center}
\begin{tikzcd}[column sep=-1.8em, row sep=3em]                                                                                          & {\check{H}^{n}(\mathscr{U},\Gamma_{i})} \arrow[ld, "\psi_{ik}"'] \arrow[rd, "\psi_i"] &                                                                    \\ {\check{H}^{n}(\varphi_{ik}^{-1}(\mathscr{U}),\Gamma_{k})} \arrow[d, "r"'] \arrow[rr, "\psi_k"] &                                                                                       & {\check{H}^{n}(\varphi_i^{-1}(\mathscr{U}),\Gamma)} \arrow[d, "r"] \\ {\check{H}^{n}(\mathscr{W},\Gamma_{k})} \arrow[rr, "\psi_k"]                                    &                                                                                       & {\check{H}^{n}(\varphi_k^{-1}(\mathscr{W}),\Gamma)}                \end{tikzcd}
\par\end{center}

\noindent \medskip{}

In this diagram, the restriction ${\check{H}^{n}(\varphi_{i}^{-1}(\mathscr{U}),\Gamma)\to\check{H}(\varphi_{k}^{-1}(\mathscr{W}),\Gamma)}$
maps $\psi_{i}([\theta_{i}])$ to $0$. Let $[\theta_{k}]$ be the
image under the restriction map \\
${\check{H}^{n}(\varphi_{ik}^{-1}(\mathscr{U}),\Gamma_{k})\to\check{H}(\mathscr{W},\Gamma_{k})}$
of $\psi_{ik}([\theta_{i}])$. Then ${\psi_{k}([\theta_{k}])=0}$
in $\check{H}^{n}(\varphi_{k}^{-1}(\mathscr{W}),\Gamma)$. However,
by Corollary~\ref{Cechcontinuitycorollary1}, ${\check{H}^{n}(\varphi_{k}^{-1}(\mathscr{W}),\Gamma)\cong\underset{k\preceq\ell}{\varinjlim}\check{H}^{n}(\varphi_{k\ell}^{-1}(\mathscr{W}),\Gamma_{\ell})}$.
Thus, for some ${\ell\succeq k}$, ${\psi_{k\ell}([\theta_{k}])=0}$
in $\check{H}(\varphi_{\ell}^{-1}(\mathscr{W}),\Gamma_{\ell})$, so
${h_{i\ell}(\theta)=0}$ in $\check{H}^{n}(X_{\ell},\Gamma_{\ell})$.
Therefore, ${\ker(h)=0}$ and $h$ is injective.

To see that $h$ is surjective, note that each element $\zeta$ of
$\check{H}^{n}(X,\Gamma)$ is represented by a cohomology class $[\zeta_{i}]\in\check{H}^{n}(\varphi_{i}^{-1}(\mathscr{U}),\Gamma)$
for some $i\in I$ and some finite open cover $\mathscr{U}$ of $X_{i}$.
By Corollary~\ref{Cechcontinuitycorollary1}, there exists some $j\succeq i$
and $[\theta_{i}]\in\check{H}(\varphi_{ij}^{-1}(\mathscr{U}),\Gamma_{j})$
such that $\psi_{j}([\theta_{i}])=[\zeta_{i}]$. Then the equivalence
class $\theta\in\check{H}^{n}(X_{j},\Gamma_{j})$ represented by $[\theta_{i}]$
has $h_{j}(\theta)=[\psi_{j}([\theta_{i}])]=\zeta$. Therefore, $h$
is surjective, and so an isomorphism.\end{proof}

Define the \emph{limiting sheaf} of the system of sheaves to be the
sheaf $\hat{\Gamma}$ on $X$ which is the sheafification of $\Gamma$.
By composing $h$ with the isomorphism $\check{{H}}^{*}(X,\Gamma)\cong\check{{H}}^{*}(X,\hat{\Gamma})$
described in the discussion following Corollary~\ref{completioncechcorollary},
we obtain the following:

\noindent \begin{corollary}\label{Cechcontinuitycorollary2}

For a surjective inverse system of spaces with a system of sheaves
$\Gamma_{i}$, if $\hat{\Gamma}$ is the limiting sheaf, then 
\[
\varinjlim\check{H}^{n}(X_{i},\Gamma_{i})\cong\check{H}^{n}(X,\hat{\Gamma})
\]

\noindent \end{corollary}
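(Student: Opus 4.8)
The plan is to obtain the stated isomorphism by composing the isomorphism $h$ furnished by Theorem~\ref{Cechcontinuitytheorem1} with the sheafification isomorphism on \v{C}ech cohomology established in the discussion following Corollary~\ref{completioncechcorollary}. Since all the substantive content has already been extracted in those two results, this corollary amounts to checking that their hypotheses are met and then stringing the maps together.

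First I would invoke Theorem~\ref{Cechcontinuitytheorem1}: for the surjective inverse system $\{X_{i},\varphi_{ij}\}_{i\in I}$ of compact Hausdorff spaces, the homomorphism $h:\varinjlim\check{H}^{n}(X_{i},\Gamma_{i})\to\check{H}^{n}(X,\Gamma)$ is an isomorphism for all $n$, where $\Gamma$ denotes the limiting partial presheaf on the standard basis $\mathcal{B}$ of $X=\varprojlim X_{i}$. This is the step that genuinely uses the structure of the system of sheaves together with the cochain\=/level compatibility of the maps $f$ and $\psi$, and it is where the real argument resides.

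Next I would verify that $X$ is a paracompact Hausdorff space, so that the sheafification isomorphism applies to $\Gamma$. Indeed, $X=\varprojlim X_{i}$ is a closed subspace of the product $\prod_{i}X_{i}$, which is compact by Tychonoff's theorem and Hausdorff since each $X_{i}$ is; hence $X$ is itself compact Hausdorff, and therefore paracompact. The discussion following Corollary~\ref{completioncechcorollary} then yields natural isomorphisms
\[
\check{H}^{n}(X,\Gamma)\cong\check{H}^{n}(X,\Gamma^{+})\cong\check{H}^{n}(X,\Gamma^{++})=\check{H}^{n}(X,\hat{\Gamma}),
\]
the final equality being precisely the definition of the limiting sheaf $\hat{\Gamma}$ as the sheafification $\Gamma^{++}$ of the limiting partial presheaf $\Gamma$.

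Composing these with $h$ gives the desired isomorphism $\varinjlim\check{H}^{n}(X_{i},\Gamma_{i})\cong\check{H}^{n}(X,\hat{\Gamma})$. The only points requiring care are bookkeeping ones: one must confirm that the presheaf $\Gamma$ appearing in Theorem~\ref{Cechcontinuitytheorem1} is the same limiting partial presheaf whose sheafification defines $\hat{\Gamma}$, and that it is \emph{paracompactness}, not mere Hausdorffness, that licenses the sheafification step via Corollary~\ref{completioncechcorollary}. I do not expect any substantive obstacle here, since the heavy lifting was already accomplished in the finite\=/cover isomorphism of Lemma~\ref{Cechcontinuitylemma1} and its globalization in Theorem~\ref{Cechcontinuitytheorem1}.
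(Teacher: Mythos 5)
Your proposal is correct and follows essentially the same route as the paper, which likewise obtains the corollary by composing the isomorphism $h$ from Theorem~\ref{Cechcontinuitytheorem1} with the sheafification isomorphism $\check{H}^{*}(X,\Gamma)\cong\check{H}^{*}(X,\hat{\Gamma})$ from the discussion following Corollary~\ref{completioncechcorollary}. Your explicit verification that $X=\varprojlim X_{i}$ is compact Hausdorff (hence paracompact) is a detail the paper leaves implicit, but it is exactly the right justification.
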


\section{Application to Profinite Group Cohomology}

Let ${G=\varprojlim G_{i}}$ be a profinite group, specifically the
inverse limit of a surjective system of finite discrete groups $G_{i}$
indexed by a directed set $I$, and let $\varphi_{i}:G\to G_{i}$
be the projection maps. Let $A$ be any discrete $G$\=/module. For
any $i\in I$, consider the open normal subgroup $\ker(\varphi_{i})$
of $G$ and $A_{i}:=A^{\ker(\varphi_{i})}$, the subgroup of $A$
consisting of elements which are invariant under the action of $\ker(\varphi_{i})$.
The action of $G$ on $A$ determines an action of $G_{i}$ on $A_{i}$;
furthermore the collection $\{A_{i}\}_{i\in I}$ with inclusion maps
naturally forms a direct system whose limit is $A$. Then, as observed
in \cite{MR2599132}, the cohomology groups $H^{k}(G,A)$ may be computed
using the cohomology groups $H^{k}(G_{i},A_{i})$; specifically there
is a family of natural direct systems $\{H^{k}(G_{i},A_{i})\}_{i\in I}$
such that $H^{k}(G,A)\cong\varinjlim H^{k}(G_{i},A_{i})$ for all
$k\geq0$.

On the other hand, since each $G_{i}$ is a discrete finite group,
it has the classifying space $BG_{i}$ with the properties that $\pi_{1}(BG_{i})\cong G_{i}$
and the cohomology groups $H_{G_{i}}^{k}(BG_{i},A_{i})$ with local
coefficients given by the action of $G_{i}$ on $A_{i}$ are naturally
isomorphic to $H^{k}(G_{i},A_{i})$ for every ${k\geq0}$; see, for
example, \cite{MR1867354}. One way to construct the classifying space
$BG_{i}$ is through simplicial sets, as described in, for example,
\cite{MR232393,MR208595}. This construction takes the geometric realization
of the nerve $NG_{i}$ of $G_{i}$, and the universal cover $EG_{i}$
of $BG_{i}$ results from taking the geometric realization of another,
closely related simplicial set, which we will call $\mathcal{E}G_{i}$.

Let $NG_{i}^{\leq n}$ be the $n$\=/truncation of $NG_{i}$, and
note that since $G_{i}$ is finite, $NG_{i}[k]$ is also finite for
every $k$, so that each $NG_{i}^{\leq n}$ is a simplicial finite
$n$\=/truncated set. The inverse system $\{G_{i},\varphi_{ij}\}$
of which $G$ is a limit induces surjective maps ${\varphi_{ij}:NG_{j}\to NG_{i}}$
which clearly restrict to maps ${\varphi_{ij}:NG_{j}^{\leq n}\to NG_{i}^{\leq n}}$,
and in both cases satisfy the necessary conditions to form a surjective
inverse system, so in particular we have the inverse system $\{NG_{i}^{\leq n},\varphi_{ij}\}$
of simplicial finite \emph{n}\=/truncated sets. For each pair $i$,
$n$, the geometric realization $|NG_{i}^{\leq n}|$ is the $n$-skeleton
$BG_{i,n}$ of the classifying space $BG_{i}=|NG_{i}|$; in particular
it is a finite CW-complex. Thus, this space is compact, Hausdorff,
path-connected, and locally contractible. Hence if $n>1$, it follows
that $\pi_{1}(|NG_{i}^{\leq n}|)\cong G_{i}$ and $H_{G_{i}}^{k}(|NG_{i}^{\leq n}|,A_{i})\cong H_{G_{i}}^{k}(|NG_{i}|,A_{i})\cong H^{k}(G_{i},A_{i})$
for all ${k<n}$. On the other hand, the action of $G_{i}$ on $A_{i}$
determines a locally constant sheaf $\mathcal{A}_{i}^{n}$ on $|NG_{i}^{\leq n}|$.
Since $|NG_{i}^{\leq n}|$ is paracompact, we obtain a natural isomorphism
$\check{{H}}^{k}(|NG_{i}^{\leq n}|,\mathcal{A}_{i}^{n})\cong H_{G_{i}}^{k}(|NG_{i}^{\leq n}|,A_{i})$
through sheaf cohomology; this is shown in, for example, Chapter 3
of \cite{MR1481706}.

If $X$ is a locally contractible, path-connected, paracompact and
Hausdorff space with the universal cover $p:\tilde{X}\to X$, let
$\mathscr{U}=\{U_{x}:x\in X\}$ be an open cover indexed by $X$ where
each $U_{x}$ is chosen such that $p^{-1}(U_{x})$ is a union of disjoint
open sets in $\tilde{X}$ which are each mapped homeomorphically onto
$U_{x}$ by $p$. Call such a cover a \emph{$p$\=/cover of $X$}.
Now, if $\mathscr{V}$ is any open cover indexed by $X$, we may choose
any $p$\=/cover $\mathscr{U}$ and take the common refinement $\mathscr{U}\cap\mathscr{V}:=\{U_{x}\cap V_{x}:x\in X\}$,
which then has $\mathscr{U},\mathscr{V}\leq\mathscr{U}\cap\mathscr{V}$.
Furthermore, each $U_{x}\cap V_{x}\subset U_{x}$ has $p^{-1}(U_{x}\cap V_{x})$
as a union of disjoint open sets of $\tilde{X}$ which map homeomorphically
onto $U_{x}\cap V_{x}$, so this is a $p$\=/cover. Hence the collection
of $p$\=/covers is cofinal in the collection of all open covers
indexed by $X$, so that $p$\=/covers alone can be used to determine
the \v{C}ech cohomology of $X$.

\begin{lemma}\label{profiniteclassifyingspace2lemma}

For a fixed $n$, the sheaves $\{\mathcal{A}_{i}^{n}\}_{i\in I}$
form a system of sheaves on the inverse system of spaces $\{|NG_{i}^{\leq n}|\}_{i\in I}$.

\end{lemma}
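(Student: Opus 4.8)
The plan is to verify directly the three requirements in the definition of a system of sheaves. The underlying inverse system $\{|NG_i^{\le n}|,\varphi_{ij}\}$ has already been shown to consist of compact Hausdorff spaces and to be surjective, so the only real work is to produce injective presheaf maps $f_{ij}\colon\mathcal A_i^n\to\varphi_{ij}\mathcal A_j^n$ for $i\preceq j$ and to check $f_{ii}=\text{id}$ and $f_{jk}f_{ij}=f_{ik}$. The essential algebraic input is the observation that, since $\varphi_i=\varphi_{ij}\circ\varphi_j$ forces $\ker(\varphi_j)\subseteq\ker(\varphi_i)$ for $i\preceq j$, we obtain an inclusion of $G$-submodules $\iota_{ij}\colon A_i=A^{\ker(\varphi_i)}\hookrightarrow A^{\ker(\varphi_j)}=A_j$. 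Moreover, regarding $A_i$ as a $G_j$-module through $\varphi_{ij}$, this inclusion is $G_j$-equivariant: for $a\in A_i$ and $h\in G_j$, lifting $h$ to $\tilde h\in G$ gives $\varphi_{ij}(h)\cdot a=\tilde h\cdot a=h\cdot a$, since both the $G_i$-action and the $G_j$-action agree with the ambient $G$-action on $A$.

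First I would realize each $\mathcal A_i^n$ geometrically. The map $\mathcal E G_i^{\le n}\to NG_i^{\le n}$ realizes to a regular covering $p_i\colon|\mathcal E G_i^{\le n}|\to|NG_i^{\le n}|$ with deck group $G_i$, and $\mathcal A_i^n$ is its associated local system: a section over an open $U$ is a locally constant $G_i$-equivariant map $p_i^{-1}(U)\to A_i$. The surjection $\varphi_{ij}\colon G_j\to G_i$ induces a simplicial map $\mathcal E G_j\to\mathcal E G_i$ (apply $\varphi_{ij}$ coordinatewise) that is $\varphi_{ij}$-equivariant and covers $NG_j\to NG_i$; restricting to $n$-skeleta and realizing gives a surjection $\widetilde\varphi_{ij}\colon|\mathcal E G_j^{\le n}|\to|\mathcal E G_i^{\le n}|$ with $p_i\circ\widetilde\varphi_{ij}=\varphi_{ij}\circ p_j$ and $\widetilde\varphi_{ij}(h\cdot\tilde y)=\varphi_{ij}(h)\cdot\widetilde\varphi_{ij}(\tilde y)$. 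I then define, for $s\in\mathcal A_i^n(U)$,
\[
f_{ij}(s):=\iota_{ij}\circ s\circ\widetilde\varphi_{ij},
\]
a map on $p_j^{-1}(\varphi_{ij}^{-1}(U))=\widetilde\varphi_{ij}^{-1}(p_i^{-1}(U))$. The equivariance of $\iota_{ij}$ established above shows $f_{ij}(s)$ is $G_j$-equivariant, hence lies in $\mathcal A_j^n(\varphi_{ij}^{-1}(U))=\varphi_{ij}\mathcal A_j^n(U)$; and since $f_{ij}$ is given by a fixed precomposition and a fixed postcomposition, it commutes with restriction and so is a map of presheaves.

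The remaining verifications are then formal. Injectivity of $f_{ij}$ follows because $\widetilde\varphi_{ij}$ maps $p_j^{-1}(\varphi_{ij}^{-1}(U))$ onto $p_i^{-1}(U)$ (a consequence of the surjectivity of $\widetilde\varphi_{ij}$ together with $p_i\circ\widetilde\varphi_{ij}=\varphi_{ij}\circ p_j$) and $\iota_{ij}$ is injective, so $f_{ij}(s)=0$ forces $s=0$. Condition \textbf{a} is immediate since $\widetilde\varphi_{ii}$ and $\iota_{ii}$ are identities, and condition \textbf{b} follows from the compositionality $\widetilde\varphi_{ik}=\widetilde\varphi_{ij}\circ\widetilde\varphi_{jk}$ and $\iota_{ik}=\iota_{jk}\circ\iota_{ij}$, which unwind to give $f_{jk}(f_{ij}(s))=\iota_{jk}\circ\iota_{ij}\circ s\circ\widetilde\varphi_{ij}\circ\widetilde\varphi_{jk}=f_{ik}(s)$, bearing in mind that $f_{jk}$ here denotes the pushforward to $|NG_i^{\le n}|$ of the sheaf map $\mathcal A_j^n\to\varphi_{jk}\mathcal A_k^n$. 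The main obstacle I anticipate is not any single computation but the careful bookkeeping of the two group actions: one must keep straight that $A_i$ carries a $G_i$-action while its image sits inside $A_j$ with a $G_j$-action, and that these are reconciled precisely through $\varphi_{ij}$, which is exactly what makes $\iota_{ij}$ a morphism of the relevant local systems (equivalently, by adjunction, a $G_j$-module map $\varphi_{ij}^{*}A_i\to A_j$). Establishing the existence and equivariance of the cover lift $\widetilde\varphi_{ij}$ cleanly, and uniformly in $n$, is the one geometric point that needs genuine care.
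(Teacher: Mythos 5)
Your proposal is correct, and its overall architecture matches the paper's: both realize $\mathcal{A}_i^n$ as the $G_i$-equivariant sections of the constant sheaf on the universal cover $|\mathcal{E}G_i^{\le n}|$, both build $f_{ij}$ from the inclusion $A_i=A^{\ker\varphi_i}\hookrightarrow A^{\ker\varphi_j}=A_j$ together with the lifted map between the covers, and both dispose of conditions \textbf{a} and \textbf{b} formally. Where you genuinely diverge is in the one nontrivial verification, namely that $f_{ij}$ carries $G_i$-equivariant sections to $G_j$-equivariant ones. You prove this by a direct pointwise computation: you record that the coordinatewise map $\widetilde\varphi_{ij}\colon|\mathcal{E}G_j^{\le n}|\to|\mathcal{E}G_i^{\le n}|$ intertwines the deck actions via $\varphi_{ij}$, and that the two module actions on $A_i\subset A_j$ are reconciled through $\varphi_{ij}$ because both are restrictions of the ambient $G$-action; equivariance of $\iota_{ij}\circ s\circ\widetilde\varphi_{ij}$ then falls out in one line. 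The paper instead never invokes the deck-group equivariance of the lift: it first defines $\hat f_{ij}$ purely sheaf-theoretically (map of constant presheaves, sheafify, compose with $\widehat{\varphi_{ij}A_j^n}\to\varphi_{ij}\hat A_j^n$), verifies containment of the image in $\mathcal{A}_j^n$ only over evenly covered neighborhoods where everything restricts to a constant sheaf, and then globalizes to arbitrary open sets by decomposing a section over a $p_i$-cover and regluing with the sheaf axiom. Your route is shorter and makes the equivariance mechanism explicit, at the cost of having to establish $\widetilde\varphi_{ij}(h\cdot\tilde y)=\varphi_{ij}(h)\cdot\widetilde\varphi_{ij}(\tilde y)$ cleanly (which you do, from the simplicial model); the paper's route trades that geometric input for a local-to-global argument that leans on the fact that $p_i$-covers are cofinal and that $\hat A_j^n$ and $\mathcal{A}_j^n$ are sheaves. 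Both establish the lemma.
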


\begin{proof}

First note that, since each $|NG_{i}^{\leq n}|$ has the universal
cover \\
${EG_{i}^{\leq n}=|\mathcal{E}G_{i}^{\leq n}|}$, the maps $\varphi_{ij}$
further determine maps $\varphi_{ij}:EG_{j}^{\leq n}\to EG_{i}^{\leq n}$
such that ${\varphi_{ij}p_{j}=p_{i}\varphi_{ij}}$ where $p_{i}:EG_{i}^{\leq n}\to|NG_{i}^{\leq n}|$
are the universal covering maps. Let $\overline{A}_{i}^{n}$ be the
constant presheaf associated with $A_{i}$ on the space $EG_{i}^{\leq n}$
and let $\hat{A}_{i}^{n}$ be its sheafification, the constant sheaf.
Then for any open subset $U$ of $EG_{i}^{\leq n}$, the map $f_{ij}:A_{i}\to A_{j}$
determines an injective map $f_{ij}:\overline{A}_{i}^{n}(U)\to\varphi_{ij}\overline{A}_{j}^{n}(U)$
as follows: If $U$ is empty, this is just the $0$ map; otherwise
the source is $A_{i}$ and the target is $A_{j}$, so the map is precisely
$f_{ij}:A_{i}\to A_{j}$. Since the restriction maps to nonempty subsets
are just the identity, these maps clearly commute with restriction
maps, as needed. This in turn induces an injective presheaf map $\hat{A}_{i}^{n}\to\widehat{\varphi_{ij}A_{j}^{n}}$
between the sheafifications of the sheaves (on $EG_{i}^{\leq n}$);
compose this with the natural map $\widehat{\varphi_{ij}A_{j}^{n}}\to\varphi_{ij}\hat{A}_{j}^{n}$
to obtain an injective presheaf map $\hat{f}_{ij}:\hat{A}_{i}^{n}\to\varphi_{ij}\hat{A}_{j}^{n}$.

Next, by definition, if $U\subset|NG_{i}^{\leq n}|$, then
\[
\mathcal{A}_{i}^{n}(U)=\{\gamma\in\hat{A}_{i}^{n}(p_{i}^{-1}(U)):\text{for every }x_{i}\in p_{i}^{-1}(U)\text{ and }g_{i}\in G_{i},\gamma(g_{i}\cdot x_{i})=g_{i}\cdot\gamma(x_{i})\}
\]
and, similarly,
\[
\mathcal{A}_{j}^{n}(\varphi_{ij}^{-1}(U))=\{\gamma\in\hat{A}_{j}^{n}(p_{j}^{-1}\varphi_{ij}^{-1}(U)):\text{for every }x_{j}\in p_{j}^{-1}\varphi_{ij}^{-1}(U)
\]
\[
\text{ and }g_{j}\in G_{j},\gamma(g_{j}\cdot x_{j})=g_{j}\cdot\gamma(x_{j})\}
\]
Since $\varphi_{ij}p_{j}=p_{i}\varphi_{ij}$, $p_{j}^{-1}\varphi_{ij}^{-1}(U)=\varphi_{ij}^{-1}p_{i}^{-1}(U)$,
so $\hat{f}_{ij}$ descends to an injective map $f_{ij}:\mathcal{A}_{i}^{n}(U)\to\varphi_{ij}\hat{A}_{j}^{n}(p_{i}^{-1}(U))=\hat{A}_{j}^{n}(\varphi_{ij}^{-1}p_{i}^{-1}(U))=\hat{A}_{j}^{n}(p_{j}^{-1}\varphi_{ij}^{-1}(U))$;
we claim that the image of this map is contained in $\varphi_{ij}\mathcal{A}_{j}^{n}(U)$.
Let $x_{i}$ be any element of $|NG_{i}^{\leq n}|$ and $U_{i}$ an
open neighborhood of $x_{i}$ such that $p_{i}^{-1}(U_{i})$ is a
union of disjoint open sets in $EG_{i}^{\leq n}$ which are each mapped
homeomorphically onto $U_{i}$ by $p_{i}$. Then, similarly, every
element $x_{j}\in\varphi_{ij}^{-1}(x_{i})$ (which is nonempty since
$\varphi_{ij}$ is surjective) has $\varphi_{ij}^{-1}(U_{i})$ as
an open neighborhood with $p_{j}^{-1}\varphi_{ij}^{-1}(U_{i})$ a
union of disjoint open sets in $EG_{j}^{\leq n}$ each of which is
mapped homeomorphically onto $\varphi_{ij}^{-1}(U_{i})$ by $p_{j}$.
Hence, $\mathcal{A}_{i}^{n}$ restricts to a constant sheaf on $U_{i}$,
and $f_{ij}(\mathcal{A}_{i}^{n}|U_{i})$ is a constant sheaf on $\varphi_{ij}^{-1}(U_{i})$,
but also $f_{ij}(\mathcal{A}_{i}^{n}(U_{i}))\leq\hat{A}_{j}^{n}(p_{j}^{-1}\varphi_{ij}^{-1}(U_{i}))=\mathcal{A}_{j}^{n}(\varphi_{ij}^{-1}(U_{i}))=\varphi_{ij}\mathcal{A}_{j}^{n}(U_{i})$
since $\mathcal{A}_{j}^{n}$ restricts to a constant sheaf on $\varphi_{ij}^{-1}(U_{i})$.

Next, let $U$ be \emph{any} open subset of $|NG_{i}^{\leq n}|$.
By the comments preceding this proof, for any open cover $\mathscr{U}$
of $U$ there exists a $p_{i}$\=/cover refinement $\mathscr{V}$
of $\mathscr{U}$, and by the above discussion each $V\in\mathscr{V}$
satisfies ${f_{ij}(\mathcal{A}_{i}^{n}(V))\leq\mathcal{A}_{j}^{n}(\varphi_{ij}^{-1}(V))}$.
Since $\hat{A}_{j}^{n}$ is a sheaf, each element $f_{ij}(\gamma)\in f_{ij}(\mathcal{A}_{i}^{n}(U))$
may be identified with a compatible $\mathscr{V}$\=/family $\{\eta_{V}\}$
where each ${\eta_{V}\in f_{ij}(\mathcal{A}_{i}^{n}(V))\leq\mathcal{A}_{j}^{n}(\varphi_{ij}^{-1}(V))}$.
Since $\mathcal{A}_{j}^{n}$ is \emph{also} a sheaf, this family uniquely
determines an element ${\eta\in\mathcal{A}_{j}^{n}(\varphi_{ij}^{-1}(V))}$
with $f_{ij}(\gamma)=\eta$. Thus ${f_{ij}(\mathcal{A}_{i}^{n}(U))\leq\mathcal{A}_{j}^{n}(\varphi_{ij}^{-1}(U))}$
also, as needed.

Finally, the conditions that $f_{ii}$ is the identity and ${f_{jk}f_{ij}=f_{ik}}$
whenever ${i\preceq j\preceq k}$ follow directly from the same conditions
holding on the system $\{A_{i}\}_{i\in I}$ of modules.\end{proof}

Let $\mathcal{A}^{n}$ be the limiting sheaf of this system on $BG^{n}$.
Then we have:

\begin{theorem}\label{profiniteclassifyingspace2theorem}

For all $k<n$, 
\[
\check{{H}}^{k}(BG^{n},\mathcal{A}^{n})\cong\varinjlim\check{{H}}^{k}(|NG_{i}^{\leq n}|,\mathcal{A}_{i}^{n})\cong\varinjlim H^{k}(G_{i},A_{i})\cong H^{k}(G,A)
\]

\end{theorem}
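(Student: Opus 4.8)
The plan is to chain together three isomorphisms, each of which is essentially already established, and the only work is to verify that the pieces fit together.

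First I would establish the middle isomorphism $\varinjlim \check{H}^k(|NG_i^{\leq n}|, \mathcal{A}_i^n) \cong \check{H}^k(BG^n, \mathcal{A}^n)$. This is exactly an application of Corollary~\ref{Cechcontinuitycorollary2} (the continuity property). To invoke it, I need that $\{|NG_i^{\leq n}|, \varphi_{ij}\}$ is a surjective inverse system of compact Hausdorff spaces — which was noted in the preceding discussion, since each $|NG_i^{\leq n}|$ is a finite CW-complex and the $\varphi_{ij}$ are surjective — and that $\{\mathcal{A}_i^n\}$ is a system of sheaves on this inverse system, which is precisely the content of Lemma~\ref{profiniteclassifyingspace2lemma}. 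Since $\mathcal{A}^n$ is defined to be the limiting sheaf, Corollary~\ref{Cechcontinuitycorollary2} applies verbatim and yields this isomorphism for all $k$ (in particular for $k<n$), with $BG^n := X = \varprojlim |NG_i^{\leq n}|$.

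Next I would handle the left-hand isomorphism $\varinjlim \check{H}^k(|NG_i^{\leq n}|, \mathcal{A}_i^n) \cong \varinjlim H^k(G_i, A_i)$. The key point is that the direct limit is an exact functor, so it suffices to produce the isomorphisms levelwise and check they are compatible with the transition maps. At each level the excerpt already supplies $\check{H}^k(|NG_i^{\leq n}|, \mathcal{A}_i^n) \cong H^k_{G_i}(|NG_i^{\leq n}|, A_i)$ (sheaf cohomology vs.\ cohomology with local coefficients, valid since $|NG_i^{\leq n}|$ is paracompact), and $H^k_{G_i}(|NG_i^{\leq n}|, A_i) \cong H^k(G_i, A_i)$ for $k<n$ (using $\pi_1(|NG_i^{\leq n}|) \cong G_i$ and that the $n$-skeleton inclusion is a cohomology isomorphism through degree $k<n$). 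Composing gives the levelwise isomorphism, and I would check that the transition maps $h_{ij}$ on the $\check{H}$ side correspond under these identifications to the natural maps $H^k(G_i,A_i) \to H^k(G_j,A_j)$ coming from the direct system $\{A_i\}$; this is where the compatibility bookkeeping lives and is the main obstacle.

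Finally, the right-hand isomorphism $\varinjlim H^k(G_i, A_i) \cong H^k(G, A)$ is quoted directly from \cite{MR2599132} and requires no argument beyond citation. I expect the genuine difficulty to be the naturality verification in the middle step: I must confirm that the morphism $f_{ij} \colon \mathcal{A}_i^n \to \varphi_{ij}\mathcal{A}_j^n$ constructed in Lemma~\ref{profiniteclassifyingspace2lemma} induces, under the two identifications above, precisely the coefficient map $A_i \hookrightarrow A_j$ on group cohomology — so that the three direct systems ($\check{H}$, local-coefficient $H^*_{G_i}$, and group cohomology $H^*(G_i,A_i)$) are genuinely isomorphic as \emph{systems}, not merely termwise. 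Once that is in place, exactness of $\varinjlim$ delivers the isomorphism of limits, and the constraint $k<n$ is inherited entirely from the skeletal approximation step.
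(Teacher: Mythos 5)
Your proposal is correct and follows essentially the same route as the paper: Corollary~\ref{Cechcontinuitycorollary2} via Lemma~\ref{profiniteclassifyingspace2lemma} for the first isomorphism, the levelwise identifications $\check{{H}}^{k}(|NG_{i}^{\leq n}|,\mathcal{A}_{i}^{n})\cong H_{G_{i}}^{k}(|NG_{i}^{\leq n}|,A_{i})\cong H^{k}(G_{i},A_{i})$ passed to the limit for the second, and the citation of \cite{MR2599132} for the third. The naturality check you flag as the main obstacle is precisely the point the paper dispatches with the single clause that the isomorphisms ``commute with both direct systems,'' so your attention to it is, if anything, more careful than the published argument.
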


\begin{proof}

Since by Lemma~\ref{profiniteclassifyingspace2lemma} the sheaves
$\{\mathcal{A}_{i}^{n}\}_{i\in I}$ form a system of sheaves on the
inverse system of spaces $\{|NG_{i}^{\leq n}|\}_{i\in I}$ and $\mathcal{A}^{n}$
is that system's limiting sheaf, Corollary~\ref{Cechcontinuitycorollary2}
gives an isomorphism $\check{{H}}^{k}(BG^{n},\mathcal{A}^{n})\cong\varinjlim\check{{H}}^{k}(|NG_{i}^{\leq n}|,\mathcal{A}_{i}^{n})$.
Next, we have the natural isomorphisms $\check{{H}}^{k}(|NG_{i}^{\leq n}|,\mathcal{A}_{i}^{n})\cong H_{G_{i}}^{k}(|NG_{i}^{\leq n}|,A_{i})=H_{G_{i}}^{k}(BG_{i},A_{i})\cong H^{k}(G_{i},A_{i})$;
hence these isomorphisms commute with both direct systems, so they
give an isomorphism $\varinjlim\check{{H}}^{k}(|NG_{i}^{\leq n}|,\mathcal{A}_{i}^{n})\cong\varinjlim H^{k}(G_{i},A_{i})$
between their limits. Finally, we apply the isomorphism $\varinjlim H^{k}(G_{i},A_{i})\cong H^{k}(G,A)$
from \cite{MR2599132} as discussed above.\end{proof}

\bibliographystyle{amsalphawithkeys}
\nocite{*}
\bibliography{CCPbibtex}

\end{document}